\documentclass[12pt]{amsart}
\usepackage{}

\usepackage{amsmath}
\usepackage{amsfonts}
\usepackage{amssymb}
\usepackage[all]{xy}           

\usepackage{bbding}
\usepackage{txfonts}
\usepackage{amscd}

\usepackage[shortlabels]{enumitem}
\usepackage{ifpdf}
\ifpdf
  \usepackage[colorlinks,final,backref=page,hyperindex]{hyperref}
\else
  \usepackage[colorlinks,final,backref=page,hyperindex]{hyperref}
\fi
\usepackage{tikz}
\usepackage[active]{srcltx}

\topmargin -.8cm \textheight 22cm \oddsidemargin 0cm \evensidemargin -0cm \textwidth 16cm

\makeatletter

\newtheorem{thm}{Theorem}[section]
\newtheorem{lem}[thm]{Lemma}
\newtheorem{cor}[thm]{Corollary}
\newtheorem{pro}[thm]{Proposition}
\newtheorem{ex}[thm]{Example}

\newtheorem{defi}[thm]{Definition}

\setlength{\baselineskip}{1.8\baselineskip}

\newcommand {\emptycomment}[1]{}

\newcommand{\cb}{\mathbb C}

\newcommand{\Cend}{{\rm Cend}}
\newcommand{\CHom}{{\rm CHom}}

\def\id{\mathop {\fam0 id}\nolimits}

\begin{document}

\title[The cohomology of left-symmetric conformal algebra and its applications]
{The cohomology of left-symmetric conformal algebra and its applications}

\author{Jun Zhao}
\address{School of Mathematics and Statistics, Henan University, Kaifeng 475004, China}
\email{zhaoj@henu.edu.cn}

\author{Bo Hou}
\address{School of Mathematics and Statistics, Henan University, Kaifeng 475004, China}
\email{bohou1981@163.com}
\vspace{-5mm}


\begin{abstract}
In this paper, we develop a cohomology theory of a left-symmetric conformal algebra and study its some applications. We define the cohomology of a left-symmetric conformal algebra, and then give an isomorphism between the cohomology spaces of the left-symmetric conformal algebra and its sub-adjacent Lie conformal algebra. As applications of the cohomology theory, we study linear deformations, formal $1$-parameter deformations, $T^*$-extensions of a left-symmetric conformal algebra respectively and obtain some properties.
\end{abstract}

\keywords{Left-symmetric conformal algebra, cohomology, deformation, $T^*$-extension}
\subjclass[2010]{17A30, 16S80, 16D20}

\maketitle




\vspace{-4mm}

\section{Introduction}\label{sec:intr}

The notion of a Lie conformal algebra encodes an axiomatic description of the operator product expansion of chiral fields in two-dimensional conformal field theory \cite{Ka,Ka1}. It has been proved to be an effective tool for the study of infinite-dimensional Lie algebras satisfying the locality property \cite{BDK,BPZ,CK}. The structure and cohomology theories of Lie conformal algebras were studied in \cite{DK} and \cite{BKV}. Other related results adout Lie conformal algebras were widely studied in a series of papers \cite{Ko2,Ko3,Ro}.

Similar to the relation between Lie algebras and left-symmetric algebras \cite{B,B1,B2}, the theory of left-symmetric conformal algebras is closely ralated to the theory of Lie conformal algebras.
Left-symmetric conformal algebras appear useful for studying Lie conformal algebras. The notion of a left-symmetric conformal algebra was introduced in \cite{HL}, where the authors constructed some module structures and matched pairs of left-symmetric conformal algebras. The reference \cite{HB} studied the corresponding analogue of the classical Yang-Baxter equation in left-symmetric conformal algebra case, namely conformal $S$-equation. And the authors also pointed out that the symmetric solutions of conformal $S$-equation can be used to construct left-symmetric conformal bialgebras. However, little is known about the cohomology theory of a left-symmetric conformal algebra. In this paper, we develop the cohomology theory of a left-symmetric conformal algebra and then study linear deformations, formal $1$-parameter deformations, $T^*$-extensions of the left-symmetric conformal algebra as applications of the cohomology theory.

The paper is organized as follows. In Section 2, we recall notions of Lie conformal algebras, conformal modules and the cohomology of Lie conformal algebras in \cite{BKV}. In Section 3, firstly we recall some related definitions of a left-symmetric conformal algebra, and then define the cohomology of a left-symmetric conformal algebra as a generalization of left-symmetric algebra \cite{B2,M,LST}. Finally, we establish an isomorphism between the cohomology spaces of a left-symmetric conformal algebra and its sub-adjacent Lie conformal algebra. In Section 4, we introduce the Nijienhuis operator of a left-symmetric conformal algebra and obtain that a Nijenhuis operator can also generate a trivial linear deformation as the Theorem \ref{thm3.5} shows. Then we obtain that a Nijenhuis operator on a left-symmetric conformal algebra is also a Nijenhuis operator on its sub-adjacent Lie conformal algebra. In Section 5, we study the formal $1$-parameter deformation of a left-symmetric conformal algebra and obtain that $2$-cohomology space is controlled by formal $1$-parameter deformations. In Section 6, as an application of cohomology, we study the $T^*$-extensions of a left-symmetric conformal algebra. By $2$-cocycles, we can construct $T^*$-extensions. Furthermore, we give sufficient and necessary conditions for two $T^*$-extensions equivalent and isometrically respectively.

Throughout this paper, we denote by $\cb$ the field of complex numbers. All tensors
over $\cb$ are denoted by $\otimes$. We denote the identity map by $\id$. Moreover, if $M$ is a vector space, then the space of polynomials of
$\lambda$ with coefficients in $M$ is denoted by $M[\lambda]$.

\bigskip

\section{Preliminaries on Lie conformal algebras}
We recall notions of Lie conformal algebras, conformal modules and the cohomology of Lie conformal algebras. The interested readers may consult \cite{BKV} for more details.

\begin{defi}
A {\rm Lie conformal algebra} $R$ is a left $\cb[\partial]$-module with a $\lambda$-bracket $[\cdot_\lambda\cdot]$ which defines a $\cb$-linear map from $R\otimes R\rightarrow R[\lambda]$ satisfying
\begin{eqnarray*}
&&[(\partial a)_\lambda b]=-\lambda[a_\lambda b],\qquad [a_\lambda \partial b]=(\partial+\lambda)[a_\lambda b],\\
&&[a_\lambda b]=-[b_{-\partial-\lambda}a],\qquad\ \
[a_\lambda[b_\mu c]]=[[a_\lambda b]_{\lambda+\mu}c]+[b_\mu[a_\lambda c]],
\end{eqnarray*}
for any $a,b,c\in R$.
\end{defi}

\begin{defi}
A $($left$)$ {\rm conformal module} $M$ over a Lie conformal algebra $R$ is a $\cb[\partial]$-module with a $\cb$-linear map  $R\otimes M\rightarrow M[\lambda], a\otimes v\mapsto a_\lambda v$ such that
\begin{eqnarray*}
&&(\partial a)_\lambda v=-\lambda a_\lambda v,\qquad\qquad
a_\lambda \partial v=(\partial+\lambda) a_\lambda v,\\
&&\qquad\qquad a_\lambda(b_\mu v)-b_\mu(a_\lambda v)=[a_\lambda b]_{\lambda+\mu}v,
\end{eqnarray*}
hold for $a,b\in R$ and $v\in M$. We will call $M$ an $R$-module for short.
\end{defi}

A Lie conformal algebra $A$ is called {\it finite} if it is finitely generated as a $\cb[\partial]$-module. The {\it rank} of a Lie conformal algebra $A$ is its rank as a $\cb[\partial]$-module. This paper we assume all $\cb[\partial]$-modules are finitely generated.

\begin{defi}
Let $M_1$ and $M_2$ be two $\cb[\partial]$-modules. A {\rm conformal linear map} from $M_1$ to $M_2$ is a $\cb$-linear map $f:M_1\rightarrow M_2[\lambda],$ denoted by $f_\lambda:M_1\rightarrow M_2[\lambda]$, such that $f_\lambda(\partial a)=(\partial+\lambda)f_\lambda a$, usually written as $f$.
\end{defi}
Denote by $\CHom(M_1,M_2)$ and $\Cend(M)$ the sets of all conformal $\cb$-linear maps from $M_1$ to $M_2$ and on $M$ respectively. Let $R$ and $M$ be two $\cb[\partial]$-modules. Denote by $\CHom(R^{\otimes n},M)$ the set of $\cb$-linear maps $\gamma:R^{\otimes n}\rightarrow M$ satisfying conformal antilinearity: \begin{eqnarray}&&\gamma_{\lambda_{1},\cdots,\lambda_{n-1}}(a_1,\cdots,\partial a_i,\cdots,a_n) =-\lambda_{i}\gamma_{\lambda_{1},\cdots,\lambda_{n-1}}(a_1,\cdots,a_i,\cdots,a_n), \quad  i=1,\ldots,n-1,\label{ant1}\\
&&\gamma_{\lambda_{1},\cdots,\lambda_{n-1}}(a_1,\cdots, \partial a_n) =(\partial+\lambda_1+\ldots+\lambda_{n-1})\gamma_{\lambda_{1},\cdots,\lambda_{n-1}}(a_1,\cdots,a_n),\label{ant2}\end{eqnarray}
and skew-symmetry:
\begin{eqnarray}&&\gamma_{\lambda_{1},\cdots,\lambda_{n-1}}(a_1,\cdots,a_i,\cdots,a_j,\cdots,a_n)
=-\gamma_{\lambda_{1},\cdots,\lambda_j,\cdots,\lambda_i,\cdots,\lambda_{n-1}}(a_1,\cdots,a_j,\cdots,a_i,\cdots,a_n),\label{skew}\\
&&\gamma_{\lambda_{1},\cdots,\lambda_{n-1}}(a_1,\cdots,a_i,\cdots,a_n)
=-\gamma_{\lambda_{1},\cdots,\lambda_{i-1},-\partial-\lambda_1-\cdots-\lambda_{n-1},\lambda_{i+1},\cdots,\lambda_{n-1}}(a_1,\cdots,a_n,\cdots,a_i),\nonumber
\end{eqnarray}
where $1\leq i<j\leq n-1.$ And denote by $\CHom(R^{\otimes n-1}\otimes R,M)$ the set of $\cb$-linear maps $\gamma:R^{\otimes n}\rightarrow M$ satisfying \eqref{ant1}, \eqref{ant2} and \eqref{skew}.

Next, we recall the cohomology of a Lie conformal algebra $R$ in \cite{BKV}. An $n$-cochain $(n\geq1)$ of the Lie conformal algebra $R$ with coefficients in an $R$-module $M$ is a $\cb$-linear map in $\CHom(R^{\otimes n},M)$. Denote by $C^n_{Lie}(R,M):=\CHom(R^{\otimes n},M)$ the set of all $n$-cochains of $R$.  For $\gamma\in C^n_{Lie}(R,M)$, $a_1,\ldots,a_{n+1}\in R$, the coboundary operator $d:C^n_{Lie}(R,M)\rightarrow C^{n+1}_{Lie}(R,M)$ is given by
\begin{eqnarray*}
&&(d \gamma)_{\lambda_1,\ldots,\lambda_n}(a_1,\ldots,a_{n+1})\\
&=&\sum\limits^n_{i=1}(-1)^{i+1}{a_i}_{\lambda_i}\gamma_{\lambda_1,\ldots,\hat{\lambda}_i,\ldots,\lambda_n}(a_1,\ldots,\hat{a}_i,\ldots,a_{n+1})+(-1)^n {(a_{n+1})}_{-\partial-\lambda_1-\cdots-\lambda_{n}}\gamma_{\lambda_1,\ldots,\lambda_{n-1}}
(a_1,\ldots,a_{n})\\
&&+\sum\limits_{1\leq i<j\leq n}(-1)^{i+j}\gamma_{\lambda_i+\lambda_j,\lambda_1,\ldots,\hat{\lambda}_{i,j},\ldots,\lambda_n}
([{a_i}_{\lambda_i}a_j],a_1,\ldots,\hat{a}_{i,j},\ldots,a_{n+1})\\
&&+(-1)^{i+n+1}\gamma_{-\partial-\lambda_1-\cdots-\hat{\lambda}_{i}-\cdots-\lambda_{n},\lambda_1,\ldots,\hat{\lambda}_{i},\ldots,\lambda_{n-1}}
([{a_i}_{\lambda_i}a_{n+1}],a_1,\ldots,\hat{a}_{i},\ldots,a_{n}).
\end{eqnarray*}
\begin{lem}(\cite{BKV})
The map $d$ satisfies $d^2=0.$
\end{lem}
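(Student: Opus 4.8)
The plan is to verify $d^2=0$ by direct computation, exploiting the fact that the coboundary operator $d$ for the Lie conformal algebra is the conformal analogue of the classical Chevalley--Eilenberg differential. The cleanest strategy is to reduce the statement to the classical case rather than grinding through the full conformal expansion. Concretely, I would first observe that the formula for $d$ has exactly the same combinatorial shape as the Chevalley--Eilenberg differential for a Lie algebra with coefficients in a module: the first sum (with the extra $(-1)^n$ boundary term coming from the conformal antilinearity in the last slot) plays the role of the ``action'' terms $\sum (-1)^{i+1} a_i \cdot \gamma(\ldots \hat a_i \ldots)$, and the second pair of sums plays the role of the ``bracket'' terms $\sum_{i<j}(-1)^{i+j}\gamma([a_i,a_j], \ldots)$. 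The bookkeeping device is to treat the spectral variable $\lambda_{n+1}$ (for the $(n+1)$-st argument) as constrained by $\lambda_1+\cdots+\lambda_{n+1}=-\partial$, so that the asymmetric-looking last terms are really the $i=n+1$ or $j=n+1$ instances of the uniform formula, written out using skew-symmetry and conformal antilinearity \eqref{ant1}--\eqref{ant2}.

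First I would write $d=d_1+d_2$, where $d_1$ collects the two action-type terms (the sum over $i$ together with the $(-1)^n$ term) and $d_2$ collects the two bracket-type terms (the sum over $i<j$ together with the $(-1)^{i+n+1}$ term). Then $d^2=d_1^2+(d_1d_2+d_2d_1)+d_2^2$, and I would show each grouping vanishes separately. The term $d_1^2$ vanishes by the conformal module axiom $a_\lambda(b_\mu v)-b_\mu(a_\lambda v)=[a_\lambda b]_{\lambda+\mu}v$ together with antisymmetry of the signs: the symmetric part of the double action cancels in pairs $(i,j)$ versus $(j,i)$, leaving only the commutator, which is then absorbed. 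The cross term $d_1 d_2 + d_2 d_1$ vanishes because the action on a bracket matches the bracket of actions, again using the module axiom, and the leftover terms pair off against the antisymmetrized signs. Finally $d_2^2$ vanishes by the conformal Jacobi identity $[a_\lambda[b_\mu c]]=[[a_\lambda b]_{\lambda+\mu}c]+[b_\mu[a_\lambda c]]$ applied to the triples of arguments fed into two nested brackets, exactly as in the classical proof that the Jacobi identity yields $d^2=0$ on the bracket terms.

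Throughout, the index shifts forced by the conformal setting must be tracked carefully: when a bracket $[{a_i}_{\lambda_i}a_j]$ is formed it occupies the first slot and carries the combined spectral variable $\lambda_i+\lambda_j$, so a second application of $d$ relabels all remaining variables, and the hatted-index notation $\hat\lambda_{i,j}$ must be matched against the corresponding arguments on both sides of each cancellation. The conformal antilinearity relations \eqref{ant1} and \eqref{ant2} are what guarantee that the terms involving $-\partial-\lambda_1-\cdots-\lambda_n$ transform correctly under the substitution $\lambda_{n+1}\mapsto -\partial-\lambda_1-\cdots-\lambda_n$, so that the ``last argument'' terms behave uniformly with the generic ones.

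The main obstacle is precisely this index and sign bookkeeping: the conformal coboundary has four structurally distinct families of terms (generic action, last-slot action, generic bracket, last-slot bracket), and after applying $d$ twice one obtains a large collection of terms that must be sorted into cancelling pairs and into applications of the two defining identities. The conceptual content is entirely captured by the module axiom and the conformal Jacobi identity; the real work is verifying that every sign $(-1)^{i+j}$, every spectral relabeling, and every use of skew-symmetry \eqref{skew} lines up so that no term is orphaned. Since this is the established cohomology of \cite{BKV}, I would organize the computation to mirror the classical Chevalley--Eilenberg verification as closely as possible and invoke that parallel to keep the argument honest rather than expanding all terms explicitly.
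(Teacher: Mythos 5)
The paper itself offers no proof of this lemma: it is imported from \cite{BKV} with a citation, so there is no internal argument to compare yours against. Judged on its own terms, your overall strategy --- uniformize the last slot by treating its spectral variable as $-\partial-\lambda_1-\cdots-\lambda_n$, split $d$ into an action part $d_1$ and a bracket part $d_2$, and make everything cancel using the conformal module axiom and the conformal Jacobi identity --- is the right one, and it mirrors how the result is actually established in \cite{BKV}.

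However, your stated organization contains a step that would fail. You claim that in $d^2=d_1^2+(d_1d_2+d_2d_1)+d_2^2$ ``each grouping vanishes separately.'' That is false: $d_1^2$ does not vanish on its own --- as you yourself half-concede, it leaves the commutators of the module actions, $\pm\bigl({a_i}_{\lambda_i}({a_j}_{\lambda_j}\gamma(\ldots))-{a_j}_{\lambda_j}({a_i}_{\lambda_i}\gamma(\ldots))\bigr)$, which by the module axiom equal the action of $[{a_i}_{\lambda_i}a_j]$ and are nonzero in general (take the adjoint module of any nonabelian Lie conformal algebra). Likewise $d_1d_2+d_2d_1$ does not vanish on its own: it contains precisely the action-of-bracket terms (arising when $d_1$ acts with the bracket argument that $d_2$ created), and these are what cancel the residue of $d_1^2$. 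The correct grouping is: $d_2^2=0$ alone (nested brackets cancel via the conformal Jacobi identity, disjoint pairs of brackets cancel pairwise by skew-symmetry of the cochain), and $d_1^2+d_1d_2+d_2d_1=0$ (module axiom plus pairwise cancellation of the mixed terms). Your word ``absorbed'' points at the right cancellation but contradicts the claimed decomposition into three independently vanishing pieces. Beyond this, the proposal never actually carries out the computation: it defers the sign and spectral bookkeeping to the analogy with the classical Chevalley--Eilenberg case, yet in the conformal setting the substitution of $-\partial-\lambda_1-\cdots-\lambda_n$ in the last slot and the rules \eqref{ant1}--\eqref{ant2} are exactly where the argument can silently break, so a complete proof would need that verification written out (or, as the paper does, an explicit appeal to \cite{BKV}).
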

The corresponding cohomology space of the complex $(C^*_{Lie}(R,M),d)$ will be denoted by $H^*_{Lie}(R,M)$. In particular, view $R$ as a conformal module on $R$, we denote the cohomology by $H^*_{Lie}(R, R)$. It is obvious that $C^1_{Lie}(R,M)$ consists of all $\cb[\partial]$-module homomorphisms from $R$ to $M$ and $Z^1_{Lie}(R,M)$ consists of derivations of from $R$ to $M$.

\bigskip
\section{The cohomology of a left-symmetric conformal algebra}
In this section, we recall some relative definitions about a left-symmetric conformal algebra and define the cohomology of a left-symmetric conformal algebra. Then discuss about the relation between the cohomology of a left-symmetric conformal algebra and its sub-adjacent Lie conformal algebra. The following notions can be found in \cite{HL}.

\begin{defi}
A {\rm left-symmetric conformal algebra} $A$ is a $\cb[\partial]$-module endowed with a $\cb$-linear map $A\otimes A\rightarrow A[\lambda]$ denoted by $a\otimes b\rightarrow a_\lambda b$ satisfying the following conditions:
\begin{eqnarray*}
&&(\partial a)_\lambda b=-\lambda a_\lambda b,\qquad  a_\lambda(\partial b)=(\partial+\lambda)a_\lambda b,\\
&&(a_\lambda b)_{\lambda+\mu}c-a_\lambda(b_\mu c)=(b_\mu a)_{\lambda+\mu} c-b_\mu(a_\lambda c),
\end{eqnarray*}
for any $ a,b,c\in A.$
\end{defi}
A left-symmetric conformal algebra $A$ is called {\it finite} if it is finitely generated as a $\cb[\partial]$-module. The {\it rank} of a left-symmetric conformal algebra $A$ is its rank as a $\cb[\partial]$-module.

\begin{ex}\label{ex2.4}
$A=\cb[\partial]a$ is a free $\cb[\partial]$-module of rank one. Define $\lambda$-product on $A$ as follows:
$$a_\lambda a=(\partial+\lambda+c)a,\qquad c\in\cb,$$
then $A$ is a left-symmetric conformal algebra.
\end{ex}

\begin{ex}
Let $(A,\cdot)$ be a left-symmetric algebra. Then {\rm current left-symmetric conformal algebra} associated to $A$ is defined by:
$$Cur A=\cb[\partial]\otimes A, \qquad a_\lambda b=a\cdot b, \ \forall \ a,b\in A.$$
\end{ex}

\begin{pro}
If $A$ is a left-symmetric conformal algebra, then the $\lambda$-bracket
$$[a_\lambda b]=a_\lambda b-b_{-\partial-\lambda}a,\qquad \forall \ a,b\in A,$$
defines a Lie conformal algebra $g(A)$, which is called the sub-adjacent Lie conformal algebra of $A$. In this case, $A$ is also called a compatible left-symmetric conformal algebra structure on the Lie conformal algebra $g(A)$.
\end{pro}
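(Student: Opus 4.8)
The plan is to verify directly that the proposed $\lambda$-bracket $[a_\lambda b]=a_\lambda b-b_{-\partial-\lambda}a$ satisfies all four axioms of a Lie conformal algebra: the two conformal sesquilinearity relations, skew-symmetry, and the Jacobi identity. The first three are short formal computations; the real content, and the expected main obstacle, is the Jacobi identity, which must be extracted from the left-symmetric conformal associativity relation
\[
(a_\lambda b)_{\lambda+\mu}c-a_\lambda(b_\mu c)=(b_\mu a)_{\lambda+\mu}c-b_\mu(a_\lambda c).
\]

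First I would check conformal antilinearity in the first slot. Using $(\partial a)_\lambda b=-\lambda a_\lambda b$ together with the skew-symmetry axiom $a_\lambda\partial b=(\partial+\lambda)a_\lambda b$ applied to the second term $b_{-\partial-\lambda}(\partial a)$, I would compute $[(\partial a)_\lambda b]=(\partial a)_\lambda b-b_{-\partial-\lambda}(\partial a)=-\lambda a_\lambda b-\partial(b_{-\partial-\lambda}a)\cdot(\ast)$; here one must track carefully how $\partial$ and the formal variable $-\partial-\lambda$ interact, since the substitution $\mu\mapsto-\partial-\lambda$ makes the derivative act on both the output and the spectral parameter. After collecting terms this should reduce to $-\lambda[a_\lambda b]$, and the companion relation $[a_\lambda\partial b]=(\partial+\lambda)[a_\lambda b]$ follows symmetrically. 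Skew-symmetry $[a_\lambda b]=-[b_{-\partial-\lambda}a]$ is essentially immediate from the definition: replacing $a\leftrightarrow b$ and $\lambda\mapsto-\partial-\lambda$ in the right-hand side and using the involutivity of that substitution returns $-[a_\lambda b]$.

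For the Jacobi identity $[a_\lambda[b_\mu c]]=[[a_\lambda b]_{\lambda+\mu}c]+[b_\mu[a_\lambda c]]$, I would expand each of the three bracket expressions fully into the underlying $\lambda$-products, producing a signed sum of sixfold-nested product terms. The strategy is to define the \emph{left-symmetric associator} $(a,b,c)_{\lambda,\mu}:=(a_\lambda b)_{\lambda+\mu}c-a_\lambda(b_\mu c)$ and rewrite the associativity axiom as the symmetry $(a,b,c)_{\lambda,\mu}=(b,a,c)_{\mu,\lambda}$. Expanding the left side minus the right side of the Jacobi identity, every term is a double product; grouping them into associators and repeatedly applying this symmetry, the non-commuting ``outer'' terms should cancel in pairs, reducing the whole expression to zero. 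The bookkeeping is delicate because the substitution $-\partial-\lambda-\mu$ appears at several places and the spectral variables must be matched precisely across the three terms.

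The hard part will be this last cancellation: keeping the spectral parameters aligned under the $-\partial-\lambda-\mu$ substitutions while applying the associator symmetry, so that the six ``mismatched'' terms organize into three applications of the left-symmetric axiom. This is the conformal analogue of the classical fact that a left-symmetric (pre-Lie) algebra has an associated Lie algebra under the commutator, and the computation mirrors that proof, with the added technical burden that each product is $\lambda$-indexed and skew-symmetry must be invoked through variable substitution rather than a simple swap of arguments.
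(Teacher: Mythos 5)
Your proposal is sound, but there is nothing in the paper to compare it against: the paper states this proposition without proof, recalling it (with the surrounding definitions) from \cite{HL}, so your direct verification supplies an argument the paper leaves to a citation. The verification you outline is the standard one and it does go through, with two bookkeeping corrections. In the first sesquilinearity check, the correct expansion under the ``$\partial$ to the left'' substitution convention is
\[
b_{-\partial-\lambda}(\partial a)=\partial\bigl(b_{-\partial-\lambda}a\bigr)+(-\partial-\lambda)\bigl(b_{-\partial-\lambda}a\bigr)=-\lambda\,b_{-\partial-\lambda}a,
\]
not $-\partial(b_{-\partial-\lambda}a)$ alone; this still yields $[(\partial a)_\lambda b]=-\lambda a_\lambda b+\lambda b_{-\partial-\lambda}a=-\lambda[a_\lambda b]$, so your stated conclusion is right and the slip is presentational. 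For the Jacobi identity, expanding $[a_\lambda[b_\mu c]]-[[a_\lambda b]_{\lambda+\mu}c]-[b_\mu[a_\lambda c]]$ produces twelve terms, of which four carry no $-\partial$ substitution and cancel by one application of the associator symmetry $(a,b,c)_{\lambda,\mu}=(b,a,c)_{\mu,\lambda}$, while the remaining eight (not six) cancel by two further applications of the same symmetry, each with the spectral parameter $\nu$ specialized to $-\partial-\lambda-\mu$: one instance of $(c_\nu b)_{\nu+\mu}a-c_\nu(b_\mu a)=(b_\mu c)_{\mu+\nu}a-b_\mu(c_\nu a)$ and one of $(a_\lambda c)_{\lambda+\nu}b-a_\lambda(c_\nu b)=(c_\nu a)_{\nu+\lambda}b-c_\nu(a_\lambda b)$. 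The specializations are coherent precisely because $(\partial x)_\rho y=-\rho\,x_\rho y$ and $x_\rho(\partial y)=(\partial+\rho)\,x_\rho y$ give identities such as $(b_{-\partial-\lambda}a)_{\lambda+\mu}c=(b_\mu a)_{\lambda+\mu}c$ and $c_{-\partial-\lambda-\mu}(b_{-\partial-\lambda}a)=c_{-\partial-\lambda-\mu}(b_\mu a)$, which is exactly the ``delicate bookkeeping'' you anticipated. With the counts fixed, your three applications of the left-symmetric axiom close the argument exactly as you predicted.
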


\begin{defi}
A {\rm homomorphism} from a left-symmetric conformal algebra $A$ to a left-symmetric conformal algebra $A'$ is a $\cb[\partial]$-module homomorphism $\phi:A\rightarrow A'$ such that the following condition is satisfied:
\begin{eqnarray}\phi(a_\lambda b)=\phi(a)_\lambda\phi(b)\label{defi2.6}
\end{eqnarray}
for any $a,b\in A$.
\end{defi}

\begin{defi}
A (conformal) {\rm module} $M$ over a left-symmetric conformal algebra $A$ is a $\cb[\partial]$-module with two $\cb$-linear maps $A\otimes M\rightarrow M[\lambda], a\otimes v\mapsto a_\lambda v$ and $M\otimes A\rightarrow M[\lambda], v\otimes a\mapsto v_\lambda a$ such that
\begin{eqnarray*}
&&(\partial a)_\lambda v=[\partial,a_\lambda]v=-\lambda a_\lambda v,\qquad
(a_\lambda b)_{\lambda+\mu}v-a_\lambda(b_\mu v)=(b_\mu a)_{\lambda+\mu} v-b_\mu(a_\lambda v),\\
&&(\partial v)_\lambda a=[\partial,v_\lambda]a=-\lambda v_\lambda a,\qquad
(a_\lambda v)_{\lambda+\mu}b-a_\lambda(v_\mu b)=(v_\mu a)_{\lambda+\mu} b-v_\mu(a_\lambda b),
\end{eqnarray*}
hold for $a,b\in A$ and $v\in M$.
\end{defi}
We give some examples of modules over some left-symmetric conformal algebras \cite{HL}.

\begin{ex}
Let $A=\cb[\partial]a$ be the left-symmetric conformal algebra in Example \ref{ex2.4}. Then $M=\cb[\partial]L$ can be endowed with a module structure on $A$ as follows:
$$a_\lambda L=(\partial+c_1 \lambda+c_2)L, \qquad\forall \ a\in A, c_1, c_2\in \cb.$$
\end{ex}

\begin{ex}
Let $A$ be a left-symmetric algebra and $M$ be an $A$-module. Then, $\cb[\partial]M$ can be given naturally a module structure over the current left-symmetric conformal algebra $\cb[\partial]A$ as follows:
$$a_\lambda v=av, \ v_\lambda a=va,\qquad \forall\  a\in A, v\in M.$$
\end{ex}
Let $a_\lambda v=l(a)_\lambda v$ and $v_\lambda a=r(a)_{-\partial-\lambda}v$. It is easy to show that the structure of a module $M$ over a left-symmetric conformal algebra $A$ is same as two $\cb[\partial]$-module homomorphisms $l$ and $r$ from $A$ to $\Cend(M)$ such that the following conditions:
\begin{eqnarray}
&&{l(a_\lambda b)}_{\lambda+\mu}v-{l(a)}_\lambda({l(b)}_\mu v)={l(b_\mu a)}_{\lambda+\mu}v-{l(b)}_\mu({l(a)}_\lambda v),\label{exp2.9-1}\\
&&{r(b)}_{-\partial-\lambda-\mu}(l(a)_\lambda v)-{l(a)}_\lambda({r(b)}_{-\partial-\mu}v)={r(b)}_{-\partial-\lambda-\mu}({r(a)}_\lambda v)-r(a_\lambda b)_{-\partial-\mu}v,\label{exp2.9-2}
\end{eqnarray}
for any $a,b\in A$ and $v\in M$. We denote this module by $(M, l, r)$.

Let $A$ be a left-symmetric conformal algebra. Define two $\cb[\partial]$-module homomorphisms $L_A$ and $R_A$ from $A$ to ${\rm Cend}(A)$ by ${L_A(a)}_\lambda b=a_\lambda b$ and ${R_A(a)}_\lambda b=b_{-\partial-\lambda}a$ for any $a,b\in A$. Then $(A, L_A, R_A)$ is an $A$-module. We call this module {\it adjoint module}.

Let $U$ be a $\cb[\partial]$-module. The {\it conformal dual space} of $U$ is defined by
$$U^{*c}=\{f_\lambda:U\rightarrow \cb[\lambda]|f \ is \ a \ conformal \ linear\  map\},$$
where $\cb$ is the trivial $\cb[\partial]$-module. In \cite{HL}, we have the following two propositions.
\begin{pro}
Let $A$ be a left-symmetric conformal algebra and $(A, L_A, R_A)$ be its adjoint module. Define two $\cb[\partial]$-module homomorphisms $L^*_A, R^*_A:A\rightarrow \Cend(A^{*c})$ as follows:
$$\big({L^*_A(a)}_\lambda f\big)_{\lambda+\mu}b=-f_\mu(a_\lambda b), \ \  \big({R^*_A(a)}_\lambda f\big)_{\lambda+\mu}b=-f_\mu(b_{-\partial-\lambda} a), \qquad \forall\ a,b\in A, f\in A^{*c}.$$
Then $(A^{*c}, L^*_A-R^*_A, -R^*_A)$ is an $A$-module, which is called the {\rm coadjoint module}.
\end{pro}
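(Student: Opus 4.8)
The statement is the assertion that the pair of $\cb[\partial]$-module homomorphisms $l:=L^*_A-R^*_A$ and $r:=-R^*_A$ satisfies the two module axioms \eqref{exp2.9-1} and \eqref{exp2.9-2}, so the plan is a direct verification. Before checking the axioms I would first confirm that the two displayed formulas genuinely define elements $L^*_A(a)_\lambda f,\,R^*_A(a)_\lambda f\in A^{*c}[\lambda]$, i.e. that for fixed $a$ and $f$ the right-hand sides are conformal linear in $b$ and transform correctly under $\partial$; this is where the conformal antilinearity of $f$ and the relations $(\partial a)_\lambda b=-\lambda a_\lambda b$, $a_\lambda(\partial b)=(\partial+\lambda)a_\lambda b$ get used, and it simultaneously shows that $a\mapsto L^*_A(a)$ and $a\mapsto R^*_A(a)$ are $\cb[\partial]$-module homomorphisms into $\Cend(A^{*c})$.

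The core of the proof is to evaluate each module axiom on an arbitrary test element $c\in A$. For \eqref{exp2.9-1} with $l=L^*_A-R^*_A$, I would pair both sides with $c$ at the appropriate spectral parameter and use the two defining formulas to rewrite every term as a scalar of the form $\pm f_\nu(\cdots)$, where the inner argument is built purely from the $\lambda$-product of $A$ (expressions such as $(a_\lambda b)_{\lambda+\mu}c$, $a_\lambda(b_\mu c)$, and their $b_{-\partial-\lambda}a$-type reflections). Once everything is pushed inside $f$, the identity to be proved reduces, after collecting terms, to the left-symmetric conformal identity $(a_\lambda b)_{\lambda+\mu}c-a_\lambda(b_\mu c)=(b_\mu a)_{\lambda+\mu}c-b_\mu(a_\lambda c)$ for $A$ — equivalently, to the fact that $(A,L_A,R_A)$ is already a module, i.e. that \eqref{exp2.9-1} and \eqref{exp2.9-2} hold for the adjoint module. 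The verification of \eqref{exp2.9-2} with $l=L^*_A-R^*_A$ and $r=-R^*_A$ proceeds in the same fashion: pair with $c$, unfold the four terms via the definitions of $L^*_A$ and $R^*_A$, and match the two sides using the same conformal identity.

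The main obstacle I anticipate is not conceptual but a matter of careful bookkeeping of the spectral parameters in the conformal dual. Each application of $L^*_A$ or $R^*_A$ introduces a substitution of the form $-\partial-\lambda$ (and, after two applications, $-\partial-\lambda-\mu$) acting inside the argument of $f$, and one must track how these substitutions interact with the conformal antilinearity of $f$ and with the reflected products $b_{-\partial-\lambda}a$ coming from $R_A$. Keeping the parameters aligned so that the surviving products land with matching subscripts — so that the left-symmetric conformal identity can be applied termwise — is the delicate step; once the parameters are organized correctly the cancellations are forced.
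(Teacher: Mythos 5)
Your plan is correct and is essentially the only proof available: the paper itself states this proposition without proof (quoting it from \cite{HL}), and the argument there is exactly your direct verification --- evaluate the module axioms \eqref{exp2.9-1}--\eqref{exp2.9-2} for $l=L^*_A-R^*_A$, $r=-R^*_A$ on a test element $c$, track the $\partial\mapsto(\text{spectral parameter})$ substitutions coming from the conformal dual and from $R_A$, and collapse everything via the left-symmetric identity. Your reduction claim is sound (for \eqref{exp2.9-1} the six surviving terms cancel after applying the left-symmetric identity both to the triple $(a,b,c)$ and, with shifted parameters, to the triples $(a,c,b)$ and $(b,c,a)$), so the outline matches the cited proof.
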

In particular, $(A^{*c}, L^*_A, 0)$ is also an $A$-module. Denote this $A$-module by $A^{*c}_L$.

\begin{pro}
Let $A$ be a left-symmetric conformal algebra and $M$ be a $\cb[\partial]$-module. Let $l$ and $r$ be two $\cb[\partial]$-module homomorphisms from $A$ to $\Cend(M)$. Then, $(M,l,r)$ is an $A$-module if and only if the $\cb[\partial]$-module $A\oplus M$ is a left-symmetric conformal algebra with the following $\lambda$-product:
$$(a+u)_\lambda(b+v)=a_\lambda b+l(a)_\lambda v+r(b)_{-\partial-\lambda}u,\qquad \forall\ a,b\in A, u,v\in M.$$
We denote this left-symmetric conformal algebra simply by $A\ltimes_{l, r}M$.
\end{pro}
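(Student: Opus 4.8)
The plan is to verify directly that the three defining axioms of a left-symmetric conformal algebra hold for the stated $\lambda$-product on $A\oplus M$, and to observe that each axiom is equivalent to the corresponding module condition on $(M,l,r)$; this proves both implications simultaneously. First I would rewrite the product in the transparent form $(a+u)_\lambda(b+v)=a_\lambda b+a_\lambda v+u_\lambda b$, where $a_\lambda v=l(a)_\lambda v$ is the left action and $u_\lambda b=r(b)_{-\partial-\lambda}u$ the right action coming from $(M,l,r)$. The key structural remark is that the product of two elements of $M$ vanishes, so $M$ sits inside $A\oplus M$ as an abelian submodule on which $A$ acts, and the $A$-part of a product of two elements never involves their $M$-parts.

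For the two conformal sesquilinearity axioms I would split each side into its $A$- and $M$-components. The $A$-components are exactly the sesquilinearity axioms of $A$, which hold by hypothesis, while the $M$-components reduce precisely to the conditions $(\partial a)_\lambda v=-\lambda a_\lambda v$, $a_\lambda\partial v=(\partial+\lambda)a_\lambda v$ together with their right-action analogues, i.e. to the statement that $l$ and $r$ are $\cb[\partial]$-module homomorphisms into $\Cend(M)$. Thus these two axioms hold for $A\oplus M$ if and only if they hold as module conditions.

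The main work is the left-symmetry axiom. I would form the associator-type expression $S(X,Y,Z)=(X_\lambda Y)_{\lambda+\mu}Z-X_\lambda(Y_\mu Z)-(Y_\mu X)_{\lambda+\mu}Z+Y_\mu(X_\lambda Z)$ for $X=a+u$, $Y=b+v$, $Z=c+w$ and expand it. Since the product is conformally trilinear and lands in $M$ only when exactly one argument lies in $M$ (the $M\times M$ part vanishing), the $M$-component of $S$ separates additively into a part linear in $w$, a part linear in $v$, and a part linear in $u$, which vanish independently as $u,v,w$ are arbitrary; the $A$-component of $S$ is the left-symmetry of $A$ and vanishes automatically. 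Setting $u=v=0$ isolates the $w$-terms and yields exactly \eqref{exp2.9-1}; setting $u=w=0$ isolates the $v$-terms and yields exactly \eqref{exp2.9-2}; and setting $v=w=0$ isolates the $u$-terms, which yields \eqref{exp2.9-2} once more after interchanging $\lambda$ and $\mu$. Hence $S$ vanishes identically if and only if \eqref{exp2.9-1} and \eqref{exp2.9-2} both hold, which is precisely the condition that $(M,l,r)$ be an $A$-module, and reading the equivalences in both directions gives both implications at once.

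The computations are routine bilinear expansions; the one step demanding care is the bookkeeping of the $\mu=-\partial-\lambda$ substitutions in the $r$-terms, since composing a right action into an outer product shifts the outer subscript from $-\partial-\mu$ to $-\partial-\lambda-\mu$. Tracking these shifted arguments correctly is the only place where an error is likely, and it is exactly what makes the isolated $u$-component coincide with \eqref{exp2.9-2} under $\lambda\leftrightarrow\mu$ rather than literally.
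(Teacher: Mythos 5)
Your proposal is correct. Note that the paper itself states this proposition without proof, citing \cite{HL}, so there is no in-paper argument to compare against; your direct verification is the standard (and surely intended) one: sesquilinearity of the product on $A\oplus M$ amounts precisely to $l,r$ being $\cb[\partial]$-module homomorphisms into $\Cend(M)$, while the left-symmetry identity has automatic $A$-component and its $M$-component splits into the parts linear in $w$, $v$, $u$, yielding \eqref{exp2.9-1}, \eqref{exp2.9-2}, and \eqref{exp2.9-2} with $\lambda$ and $\mu$ interchanged, exactly as you say. The one delicate point, which you correctly flagged, is the $-\partial$ bookkeeping: e.g.\ in identifying $(v_\mu a)_{\lambda+\mu}c$ with $r(c)_{-\partial-\lambda-\mu}\bigl(r(a)_\lambda v\bigr)$, the inner subscript $-\partial-\mu$ becomes $\lambda$ once it is placed under the outer operator $r(c)_{-\partial-\lambda-\mu}$, and this is what makes the isolated components match the paper's equations \eqref{exp2.9-1}--\eqref{exp2.9-2} literally.
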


\begin{lem}\label{sub}
Let $A$ be a left-symmetric conformal algebra and $M$ be an $A$-module. Define a $\lambda$-action on $\cb[\partial]$-module $\CHom(A,M)$ by
\begin{eqnarray*}(a_\lambda f)_{\lambda+\mu}(b)=a_\lambda(f_\mu(b))+{f_\mu(a)}_{\lambda+\mu}b-f_\mu(a_\lambda b), \qquad \forall\ a,b\in A, f\in \CHom(A,M).\end{eqnarray*}
Then $\CHom(A,M)$ is a module on the sub-adjacent Lie conformal algebra $g(A)$ of the left-symmetric conformal algebra $A$.
\end{lem}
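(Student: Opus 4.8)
The goal is to verify that the stated $\lambda$-action equips $\CHom(A,M)$ with the structure of a module over the Lie conformal algebra $g(A)$. The plan is to check directly the three module axioms from the definition of a conformal module: the two $\partial$-sesquilinearity relations $(\partial a)_\lambda f = -\lambda\, a_\lambda f$ and $a_\lambda(\partial f)=(\partial+\lambda)\,a_\lambda f$, and the key compatibility relation $a_\lambda(b_\mu f)-b_\mu(a_\lambda f)=[a_\lambda b]_{\lambda+\mu}f$, where $[a_\lambda b]=a_\lambda b - b_{-\partial-\lambda}a$ is the sub-adjacent bracket. Since these are identities between conformal linear maps $A\to M[\lambda]$, I would test each one by evaluating on an arbitrary argument $c\in A$ and comparing the resulting elements of $M$.

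First I would dispatch the two sesquilinearity axioms, which are routine. For $(\partial a)_\lambda f$, I substitute $\partial a$ for $a$ in the defining formula and use the module axiom $(\partial a)_\lambda v=-\lambda a_\lambda v$ together with the conformal antilinearity $f_\mu(\partial a)=-\mu f_\mu(a)$ (wait — more carefully, I use $({f_\mu(\partial a)})_{\lambda+\mu}c$ handled via the $\Cend$ relations and the fact that $f$ is conformal) to extract the overall factor $-\lambda$; likewise for $a_\lambda(\partial f)$ I use that $(\partial f)_\mu = (\partial+\mu)f_\mu$ as a conformal map and collect the factor $(\partial+\lambda)$. These follow formally once the bookkeeping of the $\lambda,\mu$ variables is done correctly.

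The substantive step is the compatibility axiom. I would compute both $\bigl(a_\lambda(b_\mu f)\bigr)_{\lambda+\mu+\nu}(c)$ and $\bigl(b_\mu(a_\lambda f)\bigr)_{\lambda+\mu+\nu}(c)$ by applying the defining formula twice, evaluated on a test element $c\in A$. Each expansion produces several terms: pieces where $a$ (or $b$) acts on $M$ via the module structure, pieces where $f$ is fed a $\lambda$-product, and cross terms. Subtracting the two expansions, I expect the purely $M$-module terms to recombine, via the left-symmetric module identities \eqref{exp2.9-1} and \eqref{exp2.9-2} for $(M,l,r)$, into the action of the bracket, while the terms in which $f$ receives products of $a,b,c$ should reorganize using the left-symmetric conformal identity $(a_\lambda b)_{\lambda+\mu}c-a_\lambda(b_\mu c)=(b_\mu a)_{\lambda+\mu}c-b_\mu(a_\lambda c)$ in $A$. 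The target side $\bigl([a_\lambda b]_{\lambda+\mu}f\bigr)_{\lambda+\mu+\nu}(c)$ is obtained by plugging $[a_\lambda b]=a_\lambda b-b_{-\partial-\lambda}a$ into the same defining formula, and I would match it term by term against the difference.

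The main obstacle I anticipate is not conceptual but the careful tracking of the spectral parameters through the nested applications of the formula, particularly the shifts $-\partial-\lambda$ arising in the bracket term $b_{-\partial-\lambda}a$ and the way $\partial$ acts inside $M$ when $f$ is applied. The defining formula contains a term $f_\mu(a)_{\lambda+\mu}b$ in which the output of $f$ acts back on $A$; one must be vigilant that this only makes sense because $M$ carries the right-action $r$, and the closure of the computation hinges on precisely the identity \eqref{exp2.9-2} that couples $l$ and $r$. I would therefore organize the verification by grouping terms according to how many times $f$ is evaluated and on which product, so that each group collapses by a single application of one of the three defining identities of a left-symmetric conformal module; the cancellation of the genuinely mixed cross terms against the $b_{-\partial-\lambda}a$ contribution is where I expect the computation to be most delicate.
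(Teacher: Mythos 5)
Your proposal is correct and follows essentially the same route as the paper: expand both $\bigl([a_\lambda b]_{\lambda+\mu}f\bigr)$ and $a_\lambda(b_\mu f)-b_\mu(a_\lambda f)$ on a test element $c$ via the defining formula, then collapse the groups of terms using the module identities \eqref{exp2.9-1}, \eqref{exp2.9-2} and the left-symmetric identity in $A$. The only difference is cosmetic: you also verify the sesquilinearity axioms explicitly, which the paper treats as routine and omits.
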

\begin{proof}
For any $a,b,c\in A, f\in \CHom(A,M)$, we have
\begin{align*}
([a_\lambda b]_{\lambda+\mu} f)_{\lambda+\mu+\gamma}(c)
=\, &{[a_\lambda b]}_{\lambda+\mu}f_\gamma(c)+{f_\gamma([a_\lambda b])}_{\lambda+\mu+\gamma}c-f_\gamma({[a_\lambda b]}_{\lambda+\mu} c)\\
=\, &{(a_\lambda b)}_{\lambda+\mu}f_\gamma(c)-{(b_\mu a)}_{\lambda+\mu}f_\gamma(c)+{f_\gamma(a_\lambda b)}_{\lambda+\mu+\gamma}c\\
&\quad -{f_\gamma(b_\mu a)}_{\lambda+\mu+\gamma}c-f_\gamma({(a_\lambda b)}_{\lambda+\mu} c)+f_\gamma({(b_\mu a)}_{\lambda+\mu} c)
\end{align*}
and
\begin{align*}
&{(a_\lambda (b_\mu f))}_{\lambda+\mu+\gamma}(c)-{(b_\mu (a_\lambda f))}_{\lambda+\mu+\gamma}(c)\\
=\, &a_\lambda \big((b_\mu f)_{\mu+\gamma}(c)\big)+{((b_\mu f)_{\mu+\gamma}a)}_{\lambda+\mu+\gamma}(c)-(b_\mu f)_{\mu+\gamma}(a_\lambda c)\\
&\quad -b_\mu \big((a_\lambda f)_{\lambda+\gamma}(c)\big)-{({(a_\lambda f)}_{\lambda+\gamma}b)}_{\lambda+\mu+\gamma}(c)+(a_\lambda f)_{\lambda+\gamma}(b_\mu c)\\
=\, &a_\lambda \big(b_\mu (f_\gamma(c))\big)+a_\lambda \big({(f_\gamma(b)}_{\mu+\gamma}c)\big)
-a_\lambda \big(f_{\gamma}(b_\mu c)\big)+{(b_\mu (f_{\gamma}a))}_{\lambda+\mu+\gamma}(c)\\
&\quad +{({f_\gamma(b)}_{\mu+\gamma}a)}_{\lambda+\mu+\gamma}(c)-{f_\gamma(b_\mu a)}_{\lambda+\mu+\gamma}(c)-b_\mu (f_{\gamma}(a_\lambda c))
-{f_\gamma(b)}_{\mu+\gamma}(a_\lambda c)\\
&\quad+f_{\gamma}(b_\mu(a_\lambda c))-b_\mu \big(a_\lambda (f_\gamma(c))\big)-b_\mu \big({(f_\gamma(a)}_{\lambda+\gamma}c)\big)+b_\mu \big(f_{\gamma}(a_\lambda c)\big)\\
&\quad-{(a_\lambda (f_{\gamma}b))}_{\lambda+\mu+\gamma}(c)-{({(f_\gamma a)}_{\lambda+\gamma}b)}_{\lambda+\mu+\gamma}(c)+{f_\gamma(a_\lambda b)}_{\lambda+\mu+\gamma}(c)\\
&\quad+a_\lambda (f_{\gamma}(b_\mu c))+{(f_\lambda a)}_{\lambda+\gamma}(b_\mu c)-f_{\gamma}(a_\lambda(b_\mu c)).
\end{align*}
Since $M$ is a module on the left-symmetric conformal algebra $A$, then the above two terms are equal.
\end{proof}

Next, we give the cohomology of a left-symmetric conformal algebra $A$ with coefficients in an $A$-module $M$. An $n$-cochain $(n\geq1)$ of the left-symmetric conformal algebra $A$ with coefficients in an $A$-module $M$ is a $\cb$-linear map in $\CHom(A^{\otimes (n-1)}\otimes A,M)$. Denote by $C^n(A,M):=\CHom(A^{\otimes (n-1)}\otimes A,M)$ the set of all $n$-cochains of $A$. For $\gamma\in C^n(A,M)$, the coboundary operator $\delta:C^n(A,M)\rightarrow C^{n+1}(A,M)$ is given by
\begin{eqnarray*}
&&(\delta \gamma)_{\lambda_1,\ldots,\lambda_n}(a_1,\ldots,a_{n+1})\\
&=&\sum\limits^n_{i=1}(-1)^{i+1}{a_i}_{\lambda_i}\gamma_{\lambda_1,\ldots,\hat{\lambda}_i,\ldots,\lambda_n}(a_1,\ldots,\hat{a}_i,\ldots,a_{n+1})\\
&&+\sum\limits^n_{i=1}(-1)^{i+1}{\gamma_{\lambda_1,\ldots,\hat{\lambda}_i,\ldots,\lambda_n}
(a_1,\ldots,\hat{a}_i,\ldots,a_{n},a_i)}_{\lambda_1+\ldots+\lambda_n}a_{n+1}\\
&&-\sum\limits^n_{i=1}(-1)^{i+1}\gamma_{\lambda_1,\ldots,\hat{\lambda}_i,\ldots,\lambda_n}
(a_1,\ldots,\hat{a}_i,\ldots,a_{n},{a_i}_{\lambda_i}a_{n+1})\\
&&+\sum\limits_{1\leq i<j\leq n}(-1)^{i+j}\gamma_{\lambda_i+\lambda_j,\lambda_1,\ldots,\hat{\lambda}_{i,j},\ldots,\lambda_n}
([{a_i}_{\lambda_i}a_j],a_1,\ldots,\hat{a}_{i,j},\ldots,a_{n+1}).
\end{eqnarray*}

Next we aim to prove $\delta^2=0$. For $\cb[\partial]$-modules $A$ and $M$, there is a natural $\cb[\partial]$-module isomorphism
\begin{eqnarray*}
\Phi:\CHom\big(A^{\otimes(n-1)}, \CHom(A,M)\big)&\longrightarrow& \CHom(A^{\otimes(n-1)}\otimes A,M) \\
\gamma&\longmapsto& \Phi(\gamma)
\end{eqnarray*}
defined by
$${\Phi(\gamma)}_{\lambda_1,\cdots,\lambda_{n-1}}(a_1,\cdots,a_{n-1},a_n)
={\big(\gamma_{\lambda_1,\cdots,\lambda_{n-2}}(a_1,\cdots,a_{n-1})\big)}_{\lambda_1+\cdots+\lambda_{n-1}}(a_n),\ \forall\ a_1,\cdots,a_n\in A.$$

\begin{lem}\label{diagram}
Let $A$ be a left-symmetric conformal algebra and $M$ be an $A$-module. Then we have the following commutative diagram
$$\xymatrix
{\CHom\big({g(A)}^{\otimes(n-1)}, \CHom(A,M)\big)\ar[r]^{\ \ \ \ \ \ \  \Phi}\ar@{->}[d]_{d}& \CHom(A^{\otimes(n-1)}\otimes A,M)\ar@{->}[d]_{\delta}&\\
\CHom\big({g(A)}^{\otimes n}, \CHom(A,M)\big)\ar[r]^{\ \ \ \ \ \  \Phi}& \CHom(A^{\otimes n}\otimes A,M),
}$$
where $d$ is the coboundary operator of the sub-adjacent Lie conformal algebra $g(A)$ with coefficients in the module $\CHom(A,M)$ defined in Lemma \ref{sub} and $\delta$ is the coboundary operator of the left-symmetric conformal algebra $A$ with coefficients in the module $M$.
\end{lem}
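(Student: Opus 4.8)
The plan is to prove commutativity of the square by direct computation, verifying that $\Phi \circ d = \delta \circ \Phi$ as maps from $\CHom\big({g(A)}^{\otimes(n-1)}, \CHom(A,M)\big)$ to $\CHom(A^{\otimes n}\otimes A,M)$. First I would fix $\gamma \in \CHom\big({g(A)}^{\otimes(n-1)}, \CHom(A,M)\big)$ and elements $a_1,\ldots,a_{n+1}\in A$, and expand $\big(\Phi(d\gamma)\big)_{\lambda_1,\ldots,\lambda_n}(a_1,\ldots,a_{n+1})$ by inserting the definition of the Lie conformal coboundary $d$ from Section 2 (applied to the module $\CHom(A,M)$ via Lemma \ref{sub}), and then applying $\Phi$, which amounts to pairing the resulting $(n)$-cochain in $\CHom(A,M)$ against $a_{n+1}$ at the spectral parameter $\lambda_1+\cdots+\lambda_n$.

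The central mechanism is that $d$ involves the $g(A)$-module action of $a_i$ on elements of $\CHom(A,M)$, and this action is itself defined in Lemma \ref{sub} by the three-term formula $(a_\lambda f)_{\lambda+\mu}(b)=a_\lambda(f_\mu(b))+{f_\mu(a)}_{\lambda+\mu}b-f_\mu(a_\lambda b)$. Thus each of the first-type terms in $d\gamma$, after applying $\Phi$ and evaluating on $a_{n+1}$, splits into exactly three summands. I expect these three summands to match, respectively, the first sum (the $l$-action term ${a_i}_{\lambda_i}\gamma(\ldots)$), the second sum (the $R_A$-type term ${\gamma(\ldots,a_i)}_{\lambda_1+\cdots+\lambda_n}a_{n+1}$), and the third sum (the term $-\gamma(\ldots,{a_i}_{\lambda_i}a_{n+1})$) appearing in the definition of $\delta$. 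Meanwhile the bracket terms $[{a_i}_{\lambda_i}a_j]$ in $d$ correspond directly to the fourth sum of $\delta$, since the $g(A)$-bracket is precisely $a_\lambda b - b_{-\partial-\lambda}a$. The most delicate point is the treatment of the final ($i$-and-$(n{+}1)$) terms in the Lie coboundary $d$, which carry the shifted spectral parameter $-\partial-\lambda_1-\cdots-\hat{\lambda}_i-\cdots-\lambda_n$; here one must carefully use the conformal antilinearity and skew-symmetry relations \eqref{ant2} and \eqref{skew}, together with the identity $b_{-\partial-\lambda}a$ defining $R_A$, to convert the ``last-slot'' Lie-type terms into the corresponding left-symmetric terms evaluated on $a_{n+1}$.

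The main obstacle will be bookkeeping the spectral-parameter substitutions consistently: when $\Phi$ moves the evaluation on $a_n$ (in the Lie picture, the last tensor factor carrying $\CHom(A,M)$) into the extra slot $a_{n+1}$ at parameter $\lambda_1+\cdots+\lambda_n$, the indices and the hatted omissions must be tracked precisely so that the $n$-th Lie term ${(a_n)}_{-\partial-\lambda_1-\cdots-\lambda_{n-1}}\gamma(\ldots)$ expands correctly under the Lemma \ref{sub} action and reproduces the matching pieces of $\delta$. I would organize the verification by collecting terms according to their type (action-on-$M$ terms, terms producing an inner $\lambda$-product ${(\ )}_{\lambda_1+\cdots+\lambda_n}a_{n+1}$, terms producing an inner evaluation $\gamma(\ldots, {a_i}_{\lambda_i}a_{n+1})$, and bracket terms), and checking that each collection on the $d$-side equals the corresponding sum on the $\delta$-side, using the module axioms \eqref{exp2.9-1}, \eqref{exp2.9-2} only where the three-term expansions interact. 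Once the diagram commutes, $\delta^2=0$ follows immediately from $d^2=0$ (the preceding Lemma of \cite{BKV}) together with the fact that $\Phi$ is an isomorphism.
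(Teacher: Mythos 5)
Your proposal matches the paper's proof in approach and in every essential mechanism: both expand $\Phi(d\gamma)$ and $(\delta\circ\Phi)(\gamma)$ directly, absorb the last-slot Lie terms carrying the parameter $-\partial-\lambda_1-\cdots-\lambda_{n-1}$ into the main sums via conformal antilinearity, and then observe that expanding the three-term $g(A)$-action of Lemma \ref{sub} produces exactly the first three sums in the definition of $\delta$ while the $[{a_i}_{\lambda_i}a_j]$ terms give the fourth. One minor correction: the axioms \eqref{exp2.9-1}--\eqref{exp2.9-2} are not needed in the diagram computation itself (the matching is purely term-by-term once Lemma \ref{sub} is granted); they enter only in the proof of Lemma \ref{sub}.
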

\begin{proof}
For any $\gamma\in\CHom\big({g(A)}^{\otimes(n-1)},\CHom(A,M)\big)$, on the one hand, we have
\begin{eqnarray*}
&&{(\delta\circ\Phi)(\gamma)}_{\lambda_1,\ldots,\lambda_n}(a_1,\ldots,a_n,a_{n+1})\\
&=&\sum\limits^n_{i=1}(-1)^{i+1}{a_i}_{\lambda_i}{(\Phi\gamma)}_{\lambda_1,\ldots,\hat{\lambda}_i,\ldots,\lambda_n}(a_1,\ldots,\hat{a}_i,\ldots,a_{n+1})\\
&&+\sum\limits^n_{i=1}(-1)^{i+1}{(\Phi\gamma)}_{\lambda_1,\ldots,\hat{\lambda}_i,\ldots,\lambda_n}
(a_1,\ldots,\hat{a}_i,\ldots,a_{n},a_i)_{\lambda_1+\ldots+\lambda_n}a_{n+1}\qquad\qquad\quad\\
\end{eqnarray*}
\begin{eqnarray*}
&&-\sum\limits^n_{i=1}(-1)^{i+1}{{(\Phi\gamma)}_{\lambda_1,\ldots,\hat{\lambda}_i,\ldots,\lambda_n}
(a_1,\ldots,\hat{a}_i,\ldots,a_{n},{a_i}}_{\lambda_i}a_{n+1})\\
&&+\sum\limits_{1\leq i<j\leq n}(-1)^{i+j}{(\Phi\gamma)}_{\lambda_i+\lambda_j,\lambda_1,\ldots,\hat{\lambda}_{i,j},\ldots,\lambda_n}
([{a_i}_{\lambda_i}a_j],a_1,\ldots,\hat{a}_{i,j},\ldots,a_{n+1})\\
&=&\sum\limits^n_{i=1}(-1)^{i+1}{a_i}_{\lambda_i}
\big({{\gamma}_{\lambda_1,\ldots,\hat{\lambda}_i,\ldots,\lambda_{n-1}}(a_1,\ldots,\hat{a}_i,\ldots,a_{n})}_{\lambda_1+\cdots+\hat{\lambda}_i+\cdots+\lambda_n}a_{n+1}\big)\\
&&+\sum\limits^n_{i=1}(-1)^{i+1}\big( {\gamma_{\lambda_1,\ldots,\hat{\lambda}_i,\ldots,\lambda_{n-1}}
(a_1,\ldots,\hat{a}_i,\ldots,a_n)}_{\lambda_1+\cdots+\hat{\lambda}_i+\cdots+\lambda_n}a_i \big)_{\lambda_1+\ldots+\lambda_n}a_{n+1}\\
&&-\sum\limits^n_{i=1}(-1)^{i+1}{\gamma_{\lambda_1,\ldots,\hat{\lambda}_i,\ldots,\lambda_{n-1}}
(a_1,\ldots,\hat{a}_i,\ldots,a_n)}_{\lambda_1+\cdots+\hat{\lambda}_i+\cdots+\lambda_n}({a_i}_{\lambda_i}a_{n+1})\\
&&+\sum\limits_{1\leq i<j\leq n}(-1)^{i+j}{\gamma_{\lambda_i+\lambda_j,\lambda_1,\ldots,\hat{\lambda}_{i,j},\ldots,\lambda_{n-1}}
([{a_i}_{\lambda_i}a_j],a_1,\ldots,\hat{a}_{i,j},\ldots,a_n)}_{\lambda_1+\cdots+\lambda_n}a_{n+1},
\end{eqnarray*}
for any $a_1, \ldots, a_{n+1}\in A$. On the other hand, we obtain
\begin{eqnarray*}
&&{\Phi (d\gamma)}_{\lambda_1,\ldots,\lambda_n}(a_1,\ldots,a_n,a_{n+1})\\
&=&\sum\limits^{n-1}_{i=1}(-1)^{i+1}{\big({a_i}_{\lambda_i}\gamma_{\lambda_1,\ldots,\hat{\lambda}_i,\ldots,\lambda_{n-1}}
(a_1,\ldots,\hat{a}_i,\ldots,a_n)\big)}_{\lambda_1+\cdots+\lambda_n}a_{n+1}\\
&&+(-1)^{n-1} {\big({a_n}_{\lambda_n}\gamma_{\lambda_1,\ldots,\lambda_{n-2}}
(a_1,\ldots,a_{n-1})\big)}_{\lambda_1+\cdots+\lambda_n}a_{n+1}\\
&&+\sum\limits_{1\leq i<j\leq n-1}(-1)^{i+j}{\big(\gamma_{\lambda_i+\lambda_j,\lambda_1,\ldots,\hat{\lambda}_{i,j},\ldots,\lambda_{n-1}}
([{a_i}_{\lambda_i}a_j],a_1,\ldots,\hat{a}_{i,j},\ldots,a_n)\big)}_{\lambda_1+\cdots+\lambda_n}a_{n+1}\\
&&+(-1)^{i+n}{\big(\gamma_{\lambda_i+\lambda_n,\lambda_1,\ldots,\hat{\lambda}_{i},\ldots,\lambda_{n-2}}
([{a_i}_{\lambda_i}a_n],a_1,\ldots,\hat{a}_{i},\ldots,a_{n-1})\big)}_{\lambda_1+\cdots+\lambda_n}a_{n+1}\\
&=&\sum\limits^{n}_{i=1}(-1)^{i+1}{\big({a_i}_{\lambda_i}\gamma_{\lambda_1,\ldots,\hat{\lambda}_i,\ldots,\lambda_{n-1}}
(a_1,\ldots,\hat{a}_i,\ldots,a_n)\big)}_{\lambda_1+\cdots+\lambda_n}a_{n+1}\\
&&+\sum\limits_{1\leq i<j\leq n}(-1)^{i+j}{\big(\gamma_{\lambda_i+\lambda_j,\lambda_1,\ldots,\hat{\lambda}_{i,j},\ldots,\lambda_{n-1}}
([{a_i}_{\lambda_i}a_j],a_1,\ldots,\hat{a}_{i,j},\ldots,a_n)\big)}_{\lambda_1+\cdots+\lambda_n}a_{n+1}.
\end{eqnarray*}
Since $\CHom(A,M)$ is a $g(A)$-module defined in Lemma \ref{sub}, then it is obvious that $(\delta\circ\Phi)(\gamma)=\Phi (d\gamma)$.
\end{proof}

\begin{thm}
The operator $\delta:C^n(A,M)\rightarrow C^{n+1}(A,M)$ satisfies $\delta^2=0.$
\end{thm}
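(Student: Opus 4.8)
The plan is to avoid any direct computation with the explicit formula for $\delta$ and instead deduce $\delta^2=0$ from the already-established identity $d^2=0$ for the sub-adjacent Lie conformal algebra $g(A)$, transporting it across the isomorphism $\Phi$ supplied by Lemma \ref{diagram}. In other words, the whole argument is a diagram chase: all of the genuine work has been done in Lemma \ref{sub} (which makes the $g(A)$-module structure on $\CHom(A,M)$, and hence the operator $d$, available) and in Lemma \ref{diagram} (which verifies cochain-by-cochain that $\delta$ and $d$ are intertwined by $\Phi$).

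First I would record the content of Lemma \ref{diagram} as an operator identity indexed by degree. Writing $\Phi_n:\CHom\big(g(A)^{\otimes(n-1)},\CHom(A,M)\big)\to C^n(A,M)$ for the isomorphism at level $n$, the commutativity of the square reads
$$\delta\circ\Phi_n=\Phi_{n+1}\circ d,$$
where $\delta$ is the left-symmetric coboundary and $d$ is the Lie coboundary of $g(A)$ with coefficients in the module $\CHom(A,M)$. The key observation is that this identity holds for every $n$, so I may chain two consecutive squares.

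Second, I would compose. Applying the identity at level $n$ and then at level $n+1$,
$$\delta^2\circ\Phi_n=\delta\circ(\delta\circ\Phi_n)=\delta\circ\Phi_{n+1}\circ d=(\delta\circ\Phi_{n+1})\circ d=\Phi_{n+2}\circ d\circ d=\Phi_{n+2}\circ d^2.$$
By the lemma of \cite{BKV} quoted in Section 2, $d^2=0$, so the right-hand side vanishes and $\delta^2\circ\Phi_n=0$. Finally, because $\Phi_n$ is a $\cb[\partial]$-module isomorphism, in particular surjective, every $n$-cochain in $C^n(A,M)$ has the form $\Phi_n(\gamma)$; hence $\delta^2=0$ identically on $C^n(A,M)$, which is the claim.

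I expect no real obstacle here, since the analytic content is entirely contained in the two preceding lemmas. The only points needing the slightest care are bookkeeping ones: one must invoke the commutative square at the two successive degrees $n$ and $n+1$ so that the composite $d\circ d$, rather than an isolated $d$, appears on the Lie side; and one must explicitly note that it is the surjectivity of $\Phi_n$ that upgrades ``$\delta^2$ vanishes on the image of $\Phi_n$'' to ``$\delta^2=0$.'' Both are immediate, so the theorem follows at once.
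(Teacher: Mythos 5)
Your proposal is correct and is essentially the paper's own proof: the paper likewise deduces $\delta^2=0$ from Lemma \ref{diagram} and $d^2=0$, writing $\delta=\Phi\circ d\circ\Phi^{-1}$ so that $\delta^2=\Phi\circ d^2\circ\Phi^{-1}=0$. Your version, chaining the commutative squares at degrees $n$ and $n+1$ and invoking surjectivity of $\Phi$ instead of its inverse, is just a notational variant of the same diagram chase.
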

\begin{proof}
By Lemma \ref{diagram}, we have $\delta=\Phi\circ d\circ \Phi^{-1}$. Thus, we get $\delta^2=\Phi\circ d^2\circ \Phi^{-1}=0.$
\end{proof}

The corresponding cohomology space of the complex $(C^*(A,M),\delta)$ will be denoted by $H^*(A,M)$. In particular, view $A$ as a conformal module on $A$, we denote the cohomology by $H^*(A, A)$.
\begin{pro}
Let $A$ be a left-symmetric conformal algebra and $M$ be a module on $A$. Then the cohomology space $H^{n-1}_{Lie}\big(g(A),\CHom(A,M)\big)$ and $H^n(A,M)$ are isomorphic.
\end{pro}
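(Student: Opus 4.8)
The plan is to deduce the isomorphism formally from the machinery already in place, namely Lemma \ref{sub} and the commutative diagram of Lemma \ref{diagram}, so that the proposition becomes the standard homological fact that a degree-shifting isomorphism of cochain groups intertwining the differentials descends to cohomology. First I would fix the two complexes in play. By Lemma \ref{sub}, $\CHom(A,M)$ is a module over the sub-adjacent Lie conformal algebra $g(A)$, so the Lie-conformal cochain complex $\big(C^{*}_{Lie}(g(A),\CHom(A,M)),d\big)$ is defined, with $k$-cochains $C^{k}_{Lie}(g(A),\CHom(A,M))=\CHom\big(g(A)^{\otimes k},\CHom(A,M)\big)$. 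Opposite it sits $\big(C^{*}(A,M),\delta\big)$, with $C^{n}(A,M)=\CHom(A^{\otimes(n-1)}\otimes A,M)$. The map $\Phi$ introduced before Lemma \ref{diagram} is, at each level, a $\cb[\partial]$-module isomorphism
$$\Phi:C^{n-1}_{Lie}\big(g(A),\CHom(A,M)\big)\longrightarrow C^{n}(A,M),$$
so it raises cohomological degree by one. (Here I would note that $\CHom\big(g(A)^{\otimes(n-1)},\CHom(A,M)\big)$ and $\CHom\big(A^{\otimes(n-1)},\CHom(A,M)\big)$ coincide as $\cb[\partial]$-modules, since the defining constraints \eqref{ant1}, \eqref{ant2}, \eqref{skew} involve only the $\partial$-action and permutations, not the product or bracket.)

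Next I would invoke Lemma \ref{diagram}, which supplies precisely the commutation $\delta\circ\Phi=\Phi\circ d$ at each level. From this I would conclude that $\Phi$ carries cocycles to cocycles and coboundaries to coboundaries, and that $\Phi^{-1}$ does the same. Concretely, if $d\gamma=0$ then $\delta(\Phi\gamma)=\Phi(d\gamma)=0$, and since $\Phi$ is injective the converse holds as well, giving $\Phi\big(Z^{n-1}_{Lie}\big)=Z^{n}(A,M)$. Likewise $\Phi(d\beta)=\delta(\Phi\beta)$ together with surjectivity of $\Phi$ yields $\Phi\big(B^{n-1}_{Lie}\big)=B^{n}(A,M)$. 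Passing to the quotients $Z/B$ then produces the desired isomorphism
$$H^{n-1}_{Lie}\big(g(A),\CHom(A,M)\big)\cong H^{n}(A,M),$$
with inverse induced by $\Phi^{-1}$.

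I do not expect a genuine obstacle in this last step: the entire computational burden, namely verifying the compatibility of $\delta$ and $d$ through $\Phi$, was discharged in Lemma \ref{diagram}, and what remains is the routine observation that an isomorphism of cochain complexes (here shifted by one degree) induces an isomorphism on cohomology. The only points requiring minor care are bookkeeping: confirming that $\Phi$ matches $C^{n-1}_{Lie}$ with $C^{n}$ under the left-symmetric degree convention in which an $n$-cochain lives in $\CHom(A^{\otimes(n-1)}\otimes A,M)$, and checking that $\Phi$ genuinely lands in the constrained cochain spaces (respecting conformal antilinearity and skew-symmetry) rather than in the larger space of all $\cb$-linear maps. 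Both of these are already implicit in the assertion, preceding Lemma \ref{diagram}, that $\Phi$ is a $\cb[\partial]$-module isomorphism between the two prescribed $\CHom$ spaces, so I would simply make them explicit.
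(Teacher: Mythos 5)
Your proposal is correct and follows essentially the same route as the paper: both deduce from Lemma \ref{diagram} that $\Phi$ is an isomorphism of cochain complexes (with the degree shift by one) and then conclude the induced isomorphism on cohomology. The paper states this in two lines, while you additionally spell out the routine verification that cocycles and coboundaries correspond under $\Phi$, which is a harmless elaboration of the same argument.
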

\begin{proof}
By Lemma \ref{diagram}, we obtain that
$$\Phi:(C^{*-1}_{Lie}(g(A),\CHom(A,M)),d)\longrightarrow (C^*(A,M),\delta)$$
is an isomorphism of the cochain complexes. Thus, it induces an isomorphism
$$\Phi:(H^{*-1}_{Lie}(g(A),\CHom(A,M)),d)\longrightarrow(H^*(A,M),\delta).$$
We complete the proof.
\end{proof}

\bigskip

\section{Linear deformations of a left-symmetric conformal algebra}
Let $A$ be a left-symmetric conformal algebra. In this section, we study the linear deformations of the left-symmetric conformal algebra $A$.
\begin{defi}
Let $A$ be a left-symmetric conformal algebra and $\omega\in C^2(A,A)$ be a conformal bilinear map. If
$$a\cdot_\lambda ^\omega b=a_\lambda b+t\omega_\lambda(a,b)$$
defines a new left-symmetric conformal algebra structure on $A$, we say that $\omega$ generates {\rm a linear deformation} of the left-symmetric conformal algebra $A$.
\end{defi}
It is direct to check that $\omega$ is a linear deformation of a left-symmetric conformal algebra $A$ if and only if the following conditions are satisfied:
\begin{eqnarray}
&&\omega_{\lambda+\mu}(a_\lambda b,c)+{\omega_\lambda(a,b)}_{\lambda+\mu}c-\omega_\lambda(a,b_\mu c)-a_\lambda\omega_\mu(b,c)
\label{defi3.1-1}\\
&&\ \ \ \ \ \ =\omega_{\lambda+\mu}(b_\mu a,c)+{\omega_\mu(b,a)}_{\lambda+\mu}c-\omega_\mu(b,a_\lambda c)-b_\mu\omega_\lambda(a,c),\nonumber\\
&& \omega_{\lambda+\mu}(\omega_\lambda(a,b),c)-\omega_\lambda(a,\omega_\mu(b,c))
=\omega_{\lambda+\mu}(\omega_\mu(b,a),c)-\omega_\mu(b,\omega_\lambda(a,c))\label{defi3.1-2}.
\end{eqnarray}
Obviously, \eqref{defi3.1-1} implies that $\omega$ is a $2$-cocycle of the left-symmetric conformal algebra $A$ with coefficients in the adjoint module, i.e.
$$(\delta \omega)_{\lambda,\mu}(a,b,c)=0.$$
Furthermore, \eqref{defi3.1-2} means that $(A,\omega)$ is a left-symmetric conformal algebra.

\begin{defi}
Let $A$ be a left-symmetric conformal algebra. Two linear deformations $\omega^1$ and $\omega^2$ are said to be {\rm equivalent} if there exists a $\cb[\partial]$-module homomorphism $N:A\rightarrow A$ such that
$$T_t={\rm id}+tN$$
is a left-symmetric conformal algebra homomorphism from $(A,\cdot^{\omega^2}_\lambda)$ to $(A,\cdot^{\omega^1}_\lambda)$. In particular, a linear deformation $\omega$ is said to be {\rm trivial} if there exists a $\cb[\partial]$-module homomorphism $N:A\rightarrow A$ such that
$T_t={\rm id}+tN$
is a left-symmetric conformal algebra homomorphism from $(A,\cdot^{\omega}_\lambda)$ to $A$.
\end{defi}

Let $T_t={\rm id}+tN$ be a left-symmetric conformal algebra homomorphism from $(A,\cdot^{\omega^2}_\lambda)$ to $(A,\cdot^{\omega^1}_\lambda)$.
Then by \eqref{defi2.6} we obtain
\begin{eqnarray}
&&\omega^1_\lambda(N(a),N(b))=0,\label{defi3.2-1}\\
&&\omega^1_\lambda(a,N(b))+\omega^1_\lambda(N(a),b)=N(\omega^2_\lambda(a,b))-{N(a)}_\lambda N(b),\label{defi3.2-2}\\
&&\omega^2_\lambda(a,b)-\omega^1_\lambda(a,b)={N(a)}_\lambda b+a_\lambda N(b)-N(a_\lambda b).\label{defi3.2-3}
\end{eqnarray}
Note that \eqref{defi3.2-3} implies that $\omega^2-\omega^1=\delta N.$ Then we have
\begin{thm}
Let $A$ be a left-symmetric conformal algebra. If two linear deformations $\omega^1$ and $\omega^2$ are equivalent, then $\omega^1$ and $\omega^2$ are in the same cohomology class of $H^2(A,A)$.
\end{thm}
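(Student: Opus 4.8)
The plan is to extract from the equivalence of $\omega^1$ and $\omega^2$ the single cochain identity already recorded in \eqref{defi3.2-3}, and then to recognize its right-hand side as the coboundary $\delta N$ of a $1$-cochain. Concretely, since $\omega^1$ and $\omega^2$ are equivalent, by definition there is a $\cb[\partial]$-module homomorphism $N:A\rightarrow A$ such that $T_t=\mathrm{id}+tN$ is a homomorphism of left-symmetric conformal algebras from $(A,\cdot^{\omega^2}_\lambda)$ to $(A,\cdot^{\omega^1}_\lambda)$. First I would expand the homomorphism condition $T_t(a\cdot^{\omega^2}_\lambda b)=T_t(a)\cdot^{\omega^1}_\lambda T_t(b)$ using the definitions of the two deformed products and compare coefficients of $t^0$, $t^1$, $t^2$; this yields exactly the three relations \eqref{defi3.2-1}, \eqref{defi3.2-2}, \eqref{defi3.2-3}, of which only the coefficient of $t^1$, namely \eqref{defi3.2-3}, is needed here.

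The key step is to identify the right-hand side of \eqref{defi3.2-3}, that is $N(a)_\lambda b+a_\lambda N(b)-N(a_\lambda b)$, with $(\delta N)_\lambda(a,b)$. Since $C^1(A,A)=\CHom(A,A)$ consists precisely of the $\cb[\partial]$-module homomorphisms, $N$ is a $1$-cochain and $\delta N\in C^2(A,A)$ is defined. Specializing the coboundary formula to $n=1$ and $\gamma=N$, the first sum contributes $a_\lambda N(b)$, the second contributes $N(a)_\lambda b$, the third contributes $-N(a_\lambda b)$, and the double sum is empty; hence $(\delta N)_\lambda(a,b)=N(a)_\lambda b+a_\lambda N(b)-N(a_\lambda b)$. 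Comparing with \eqref{defi3.2-3} gives $\omega^2-\omega^1=\delta N$, so the difference $\omega^2-\omega^1$ is a $2$-coboundary.

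Finally I would invoke the fact, observed just after \eqref{defi3.1-1}, that a cochain generating a linear deformation is automatically a $2$-cocycle: both $\omega^1$ and $\omega^2$ lie in $Z^2(A,A)$. Together with $\omega^2-\omega^1\in B^2(A,A)$, this shows that $\omega^1$ and $\omega^2$ represent the same class in $H^2(A,A)=Z^2(A,A)/B^2(A,A)$, which is the assertion.

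I expect the only genuinely delicate point to be the bookkeeping in the $n=1$ specialization of $\delta$: one must check that the hatted slot $\hat{a}_i$ and the appended slot $a_i$ in the second and third sums collapse correctly, so that the signs and the single conformal parameter $\lambda_1=\lambda$ match \eqref{defi3.2-3} on the nose. Everything else is formal once \eqref{defi3.2-3} is in hand, and indeed the paper already anticipates the conclusion $\omega^2-\omega^1=\delta N$ immediately after stating that relation.
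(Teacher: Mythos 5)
Your proposal is correct and is essentially the paper's own argument: expand the homomorphism condition for $T_t=\mathrm{id}+tN$, isolate the coefficient of $t^1$ to get \eqref{defi3.2-3}, recognize its right-hand side as $(\delta N)_\lambda(a,b)$, and conclude $\omega^2-\omega^1=\delta N$, with both deformations being $2$-cocycles by \eqref{defi3.1-1}. One trivial bookkeeping slip: the three relations \eqref{defi3.2-1}--\eqref{defi3.2-3} come from the coefficients of $t^3$, $t^2$, $t^1$ respectively (the $t^0$ coefficient is the identity $a_\lambda b=a_\lambda b$), not from $t^0$, $t^1$, $t^2$; but since you correctly attribute \eqref{defi3.2-3} to $t^1$ and use only that relation, nothing in the proof is affected.
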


\begin{defi}\label{defi3.4}
Let $A$ be a left-symmetric conformal algebra. A $\cb[\partial]$-module homomorphism $N:A\rightarrow A$ is called a {\rm Nijenhuis operator} on $A$ if $N$ satisfies
\begin{eqnarray}
N(a)_\lambda N(b)=N(a\cdot^N_\lambda b), \qquad \forall \ a,b\in A,\label{defi3.4-1}
\end{eqnarray}
where the product $\cdot^N$ is defined by
\begin{eqnarray}a\cdot^N_\lambda b={N(a)}_\lambda b+a_\lambda N(b)-N(a_\lambda b).\label{defi3.4-2}\end{eqnarray}
\end{defi}
By \eqref{defi3.2-1}-\eqref{defi3.2-3}, it is obvious that a trivial linear deformation of a left-symmetric conformal algebra gives rise to a Nijenhuis operator. Conversely, a Nijenhuis operator can also generate a trivial linear deformation as the following theorem shows.
\begin{thm}\label{thm3.5}
Let $N$ be a Nijenhuis operator on a left-symmetric conformal algebra $A$. Then a linear deformation $\omega$ of $A$ can be obtained by putting
$$\omega_\lambda(a,b)=(\delta N)_\lambda(a,b)=a\cdot^N_\lambda b.$$
Furthermore, this linear deformation $\omega$ is trivial.
\end{thm}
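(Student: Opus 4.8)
The plan is to prove the statement in two stages: first that $\omega=\delta N$ defines a linear deformation, i.e. that $\omega$ satisfies both \eqref{defi3.1-1} and \eqref{defi3.1-2}, and then that the resulting deformation is trivial, which I would establish by exhibiting $T_t=\id+tN$ itself as a left-symmetric conformal algebra homomorphism from $(A,\cdot^\omega_\lambda)$ to $A$. The Nijenhuis identity \eqref{defi3.4-1}, rewritten via \eqref{defi3.4-2} as $N(\omega_\lambda(a,b))=N(a)_\lambda N(b)$, is the single external ingredient that drives both stages.

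First I would record that, writing out the coboundary operator $\delta$ on the $1$-cochain $N$, one gets $(\delta N)_\lambda(a,b)=N(a)_\lambda b+a_\lambda N(b)-N(a_\lambda b)=a\cdot^N_\lambda b$, which is exactly the first displayed equality and identifies $\omega$ with $\delta N$. The cocycle condition \eqref{defi3.1-1} is then immediate: since $\omega=\delta N$ and $\delta^2=0$, we get $\delta\omega=\delta^2 N=0$, so $\omega$ is automatically a $2$-cocycle. Hence the only substantive part of the deformation claim is \eqref{defi3.1-2}, the assertion that $\omega=\cdot^N$ is itself a left-symmetric conformal product.

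To prove \eqref{defi3.1-2} I would expand both sides of the $\omega$-associator identity directly from $\omega_\lambda(a,b)=N(a)_\lambda b+a_\lambda N(b)-N(a_\lambda b)$, and then use the Nijenhuis relation $N(\omega_\lambda(a,b))=N(a)_\lambda N(b)$ to replace every product of two $N$-values (and its $a\leftrightarrow b$, $\lambda\leftrightarrow\mu$ analogue) arising after expansion. After collecting terms, the part lying outside the outermost $N$ should assemble into the three associators of the \emph{original} product built from the triples $(N(a),N(b),c)$, $(N(a),b,N(c))$, $(a,N(b),N(c))$ (conformal parameters carried along, where $(x,y,z)$ abbreviates $(x_\lambda y)_{\lambda+\mu}z-x_\lambda(y_\mu z)$), while the remaining terms assemble, under a single application of $N$, into $N$ of the original associator of $(a,b,c)$ together with $-(N(a),b,c)-(a,N(b),c)-(a,b,N(c))$. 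Both packages are invariant under the simultaneous exchange $a\leftrightarrow b$, $\lambda\leftrightarrow\mu$ once one invokes the left-symmetry of the original product of $A$ (symmetry of the associator in its first two slots), and equating the two sides yields \eqref{defi3.1-2}. I expect this bookkeeping --- tracking the nested occurrences of $N$ and the parameters $\lambda,\mu,\lambda+\mu$ through the cancellations --- to be the main obstacle; the conceptual content is only the repeated use of the Nijenhuis relation and the original left-symmetry. (One could alternatively argue by transport of structure when $\id+tN$ is invertible, but the cocycle-plus-Nijenhuis computation above avoids any invertibility hypothesis on $N$.)

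Finally, for triviality I would verify that $T_t=\id+tN$ is a homomorphism from $(A,\cdot^\omega_\lambda)$ to $A$, i.e. that $T_t(a\cdot^\omega_\lambda b)=T_t(a)_\lambda T_t(b)$ with the right-hand product the original one. Expanding both sides and comparing coefficients of powers of $t$, the $t^0$ terms agree trivially, the $t^1$ terms reduce to the definition $\omega_\lambda(a,b)=N(a)_\lambda b+a_\lambda N(b)-N(a_\lambda b)$, and the $t^2$ terms reduce precisely to the Nijenhuis identity $N(\omega_\lambda(a,b))=N(a)_\lambda N(b)$. Equivalently, this is exactly conditions \eqref{defi3.2-1}--\eqref{defi3.2-3} specialized to $\omega^1=0$ and $\omega^2=\omega$. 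Hence $\omega$ is a trivial linear deformation of $A$, completing the proof.
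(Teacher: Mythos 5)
Your proposal is correct and takes essentially the same route as the paper: the cocycle condition \eqref{defi3.1-1} is dismissed as automatic (the paper reduces the deformation claim to \eqref{defi3.1-2} for exactly the reason you give, $\omega=\delta N$ being a coboundary), condition \eqref{defi3.1-2} is verified by the same direct expansion of the $\omega$-associators using \eqref{defi3.4-2}, the Nijenhuis identity \eqref{defi3.4-1} and left-symmetry of the original product, and triviality is obtained by checking that $T_t=\id+tN$ is a homomorphism from $(A,\cdot^\omega)$ to $(A,\cdot)$, whose $t$- and $t^2$-coefficients are precisely the definition of $\omega$ and the Nijenhuis identity, as in \eqref{defi3.2-1}--\eqref{defi3.2-3}. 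The only difference is presentational: the paper writes out the full expansion for \eqref{defi3.1-2} and leaves the homomorphism check as ``straightforward,'' while you do the reverse.
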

\begin{proof}
To show that $\omega$ generates a linear deformation of $A$, we only to verify that \eqref{defi3.1-2} holds, that is $(A, \cdot^\omega)$ is a left-symmetric conformal algebra. By direct computations, we have
\begin{eqnarray*}
&&\omega_{\lambda+\mu}(\omega_\lambda(a,b),c)-\omega_\lambda(a,\omega_\mu(b,c))-\omega_{\lambda+\mu}(\omega_\mu(b,a),c)
+\omega_\mu(b,\omega_\lambda(a,c))\\
&=&\omega_{\lambda+\mu}({N(a)}_\lambda b,c)+\omega_{\lambda+\mu}(a_\lambda N(b),c)-\omega_{\lambda+\mu}(N(a_\lambda b),c)-\omega_\lambda(a,{N(b)}_\mu c)\\
&&-\omega_\lambda(a,b_\mu N(c))+\omega_\lambda(a,N(b_\mu c))-\omega_{\lambda+\mu}({N(b)}_\mu a,c)-\omega_{\lambda+\mu}(b_\mu N(a),c)\\
&&+\omega_{\lambda+\mu}(N(b_\mu a),c)+\omega_\mu(b,{N(a)}_\lambda c)+\omega_\mu(b,a_\lambda N(c))-\omega_\mu(b,N(a_\lambda c))\\
&=&{N\big({N(a)}_\lambda b\big)}_{\lambda+\mu}c+\big({N(a)}_\lambda b\big)_{\lambda+\mu}N(c)-N\big(({N(a)}_\lambda b)_{\lambda+\mu}c\big)+{N\big(a_\lambda N(b)\big)}_{\lambda+\mu}c\\
&&+\big(a_\lambda N(b)\big)_{\lambda+\mu} N(c)-N\big((a_\lambda N(b))_{\lambda+\mu} c\big)-{N^2(a_\lambda b)}_{\lambda+\mu}c-{N(a_\lambda b)}_{\lambda+\mu}N(c)\\
&&+N\big({N(a_\lambda b)}_{\lambda+\mu}c\big)-{N(a)}_\lambda\big({N(b)}_\mu c\big)-a_\lambda N\big({N(b)}_\mu c\big)+N\big(a_\lambda ({N(b)}_\mu c)\big)\\
&&-{N(a)}_\lambda\big(b_\mu N(c)\big)-a_\lambda N\big(b_\mu N(c)\big)+N\big(a_\lambda (b_\mu N(c))\big)+{N(a)}_\lambda N(b_\mu c)\\
&&+a_\lambda N^2(b_\mu c)-N\big(a_\lambda N(b_\mu c)\big)-{N\big({N(b)}_\mu a\big)}_{\lambda+\mu}c-\big({{N(b)}_\mu a\big)}_{\lambda+\mu}N(c)\\
&&+N\big(({N(b)}_\mu a)_{\lambda+\mu}c\big)-{N\big(b_\mu N(a)\big)}_{\lambda+\mu}c
-{(b_\mu N(a))}_{\lambda+\mu}N(c)+N\big({(b_\mu N(a))}_{\lambda+\mu}c\big)\\
&&+{N^2(b_\mu a)}_{\lambda+\mu}c+{N(b_\mu a)}_{\lambda+\mu}N(c)-N\big({N(b_\mu a)}_{\lambda+\mu}c\big)+{N(b)}_\mu\big({N(a)}_\lambda c\big)\\
&&+b_\mu N\big({N(a)}_\lambda c\big)-N\big(b_\mu({N(a)}_\lambda c)\big)+{N(b)}_\mu\big(a_\lambda N(c)\big)+b_\mu N\big(a_\lambda N(c)\big)\\
&&-N\big(b_\mu(a_\lambda N(c))\big)-{N(b)}_\mu\big(N(a_\lambda c)\big)+b_\mu N^2(a_\lambda c)-N\big(b_\mu(N(a_\lambda c))\big).
\end{eqnarray*}
By the definitions of left-symmetric conformal algebra and Nijenhuis operator on $A$, the above equation is $0$. So $\omega$ generates a linear deformation of $A$.

We define $T_t={\rm id}+tN$. It is straightforward to deduce that $T_t$ is a left-symmetric conformal algebra homomorphism from $(A, \cdot^\omega)$ to $(A,\cdot)$. Thus, the linear deformation generated by $\omega$ is trivial.
\end{proof}

By \eqref{defi3.1-2}, \eqref{defi3.4-2} and Theorem \ref{thm3.5}, we get the following corollary.

\begin{cor}
Let $N$ be a Nijenhuis operator on a left-symmetric conformal algebra $(A,\cdot)$, then $(A,\cdot^N)$ is a left-symmetric conformal algebra, and $N$ is a homomorphism from $(A,\cdot^N)$ to $(A,\cdot)$.
\end{cor}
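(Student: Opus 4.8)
The plan is to establish the corollary directly from the Nijenhuis axioms together with the deformation result proved just above. The statement has two claims: that $(A,\cdot^N)$ is itself a left-symmetric conformal algebra, and that $N$ is a homomorphism from $(A,\cdot^N)$ to $(A,\cdot)$. I would treat these as two short steps, both of which are almost immediate consequences of material already in hand.

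For the first claim, I would invoke Theorem \ref{thm3.5}. By that theorem, setting $\omega_\lambda(a,b)=a\cdot^N_\lambda b$ yields a linear deformation of $A$, meaning $a\cdot^\omega_\lambda b=a_\lambda b+t\,\omega_\lambda(a,b)$ is a left-symmetric conformal algebra structure; in particular $\omega$ satisfies \eqref{defi3.1-2}. But \eqref{defi3.1-2} is exactly the left-symmetric conformal identity for the product $\omega=\cdot^N$ by itself, as was noted immediately after its statement (``\eqref{defi3.1-2} means that $(A,\omega)$ is a left-symmetric conformal algebra''). I would also observe that $\cdot^N$ automatically inherits the two conformal sesquilinearity relations $(\partial a)\cdot^N_\lambda b=-\lambda\,a\cdot^N_\lambda b$ and $a\cdot^N_\lambda(\partial b)=(\partial+\lambda)\,a\cdot^N_\lambda b$, since these hold for each of the three terms $N(a)_\lambda b$, $a_\lambda N(b)$, $N(a_\lambda b)$ comprising $\cdot^N$ in \eqref{defi3.4-2}, using that $N$ is a $\cb[\partial]$-module homomorphism and commutes with $\partial$. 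Hence $(A,\cdot^N)$ is a left-symmetric conformal algebra.

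For the second claim, the required homomorphism condition is $N(a\cdot^N_\lambda b)=N(a)_\lambda N(b)$, which is precisely the defining equation \eqref{defi3.4-1} of a Nijenhuis operator. Since $N$ is assumed a $\cb[\partial]$-module homomorphism and $\cdot^N$ is the product on the source while $\cdot$ is the product on the target, \eqref{defi3.4-1} says exactly that $N$ intertwines the two products, so $N$ is a homomorphism of left-symmetric conformal algebras from $(A,\cdot^N)$ to $(A,\cdot)$ by Definition \ref{defi2.6}.

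I do not anticipate any genuine obstacle here, since the heavy computation—verifying \eqref{defi3.1-2} for $\omega=\cdot^N$—was already carried out in the proof of Theorem \ref{thm3.5}. The only point requiring a moment's care is making explicit that \eqref{defi3.4-1} is literally the homomorphism axiom \eqref{defi2.6} for $N$ with respect to the pair $(\cdot^N,\cdot)$, rather than something needing further manipulation; once this identification is stated, the proof reduces to citing Theorem \ref{thm3.5} and the two displayed defining equations.
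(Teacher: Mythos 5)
Your proposal is correct and follows exactly the route the paper intends: the paper derives this corollary by citing \eqref{defi3.1-2}, \eqref{defi3.4-2} and Theorem \ref{thm3.5}, which is precisely your argument that Theorem \ref{thm3.5} makes $\omega=\cdot^N$ a linear deformation (so \eqref{defi3.1-2} gives the left-symmetric conformal identity for $\cdot^N$), while the Nijenhuis equation \eqref{defi3.4-1} is literally the homomorphism condition \eqref{defi2.6} for $N:(A,\cdot^N)\rightarrow(A,\cdot)$. Your additional remark that $\cdot^N$ inherits conformal sesquilinearity because $N$ is a $\cb[\partial]$-module homomorphism is a detail the paper leaves implicit, and it is a welcome clarification.
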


\begin{defi}
Let $R$ be a Lie conformal algebra. A $\cb[\partial]$-module homomorphism $N:R\rightarrow R$ is called a {\rm Nijenhuis operator} on $R$ if it satisfies
\begin{eqnarray}[N(a)_\lambda N(b)]=N([a_\lambda b]_N),\qquad \forall \ a,b\in R,\label{defi3.7-1}\end{eqnarray}
where the $\lambda$-bracket $[\cdot_\lambda\cdot]_N$ is defined by
\begin{eqnarray}[a_\lambda b]_N=[{N(a)}_\lambda b]+[a_\lambda N(b)]-N([a_\lambda b]).\label{defi3.7-2}\end{eqnarray}
\end{defi}

\begin{defi}
Let $R$ be a Lie conformal algebra and $\omega\in C^2(R,R)$. If
$$[a_\lambda b]^\omega=[a_\lambda b]+t\omega_\lambda(a,b)$$
defines a new Lie conformal algebra structure on $R$, we say that $\omega$ generates {\rm a linear deformation} of the Lie conformal algebra $R$.
\end{defi}

\begin{pro}
Let $A$ be a left-symmetric conformal algebra. If $\omega\in C^2(A,A)$ generates a linear deformation of $A$, then $\tilde{\omega}\in C^2(g(A),A)$
defined by
$${\tilde{\omega}}_\lambda(a,b)=\omega_\lambda(a,b)-\omega_{-\partial-\lambda}(b,a)$$
generates a linear deformation of its sub-adjacent Lie conformal algebra $g(A)$.
\end{pro}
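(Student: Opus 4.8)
The plan is to avoid verifying the Jacobi-type deformation conditions for $\tilde\omega$ by brute force, and instead to exploit the functoriality of the sub-adjacent Lie conformal algebra construction. By hypothesis the $\lambda$-product $a\cdot^\omega_\lambda b=a_\lambda b+t\omega_\lambda(a,b)$ endows $A$ with a new left-symmetric conformal algebra structure; denote this algebra by $(A,\cdot^\omega)$. First I would apply the Proposition on sub-adjacent algebras to $(A,\cdot^\omega)$ itself, concluding that the $\lambda$-bracket
$$[a_\lambda b]^{\mathrm{sub}}=a\cdot^\omega_\lambda b-b\cdot^\omega_{-\partial-\lambda}a$$
defines a genuine Lie conformal algebra structure on the underlying $\cb[\partial]$-module $A$, with no further work required.

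Next I would compute this induced bracket explicitly. Substituting the definition of $\cdot^\omega$ and collecting the $t^0$ and $t^1$ contributions gives
$$[a_\lambda b]^{\mathrm{sub}}=\big(a_\lambda b-b_{-\partial-\lambda}a\big)+t\big(\omega_\lambda(a,b)-\omega_{-\partial-\lambda}(b,a)\big)=[a_\lambda b]+t\,\tilde\omega_\lambda(a,b),$$
where $[a_\lambda b]$ is the original bracket of $g(A)$ and $\tilde\omega$ is exactly the cochain in the statement. Since the left-hand side is a Lie conformal algebra bracket, the right-hand side $[a_\lambda b]+t\tilde\omega_\lambda(a,b)$ defines a Lie conformal algebra structure on $A$, which is precisely the assertion that $\tilde\omega$ generates a linear deformation of $g(A)$.

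Two points deserve care, and I would address them briefly. First, I should confirm that $\tilde\omega$ is a legitimate element of $C^2(g(A),A)$, i.e. that it satisfies conformal antilinearity and the skew-symmetry $\tilde\omega_\lambda(a,b)=-\tilde\omega_{-\partial-\lambda}(b,a)$; the latter follows immediately from the skew-symmetrized form of $\tilde\omega$ together with the involutivity of the substitution $\lambda\mapsto-\partial-\lambda$, and antilinearity is inherited from that of $\omega$. In fact both properties are automatic once the identity above is in hand, since $t\tilde\omega=[\cdot_\lambda\cdot]^{\mathrm{sub}}-[\cdot_\lambda\cdot]$ is a difference of two conformal brackets and hence is itself conformal sesquilinear and skew-symmetric. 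Second, I would note that the single parameter $t$ is carried consistently: the same $t$ appears in $\cdot^\omega$ and in $[\cdot]^{\mathrm{sub}}$, so the Lie conformal axioms for $[\cdot]^{\mathrm{sub}}$ translate termwise into the corresponding deformation conditions for $[\cdot]+t\tilde\omega$. The substantive content — that $\tilde\omega$ is a $2$-cocycle of $g(A)$ and that the pair $(g(A),\tilde\omega)$ is compatible with the Jacobi identity — is entirely absorbed into the Proposition on sub-adjacent algebras, so the only genuine bookkeeping will be expanding $b\cdot^\omega_{-\partial-\lambda}a$ correctly, which I expect to be routine.
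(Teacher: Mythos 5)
Your proposal is correct and follows exactly the paper's own argument: the paper likewise applies the sub-adjacent Lie conformal algebra proposition to $(A,\cdot^\omega)$ and expands $a\cdot^\omega_\lambda b-b\cdot^\omega_{-\partial-\lambda}a=[a_\lambda b]+t\tilde\omega_\lambda(a,b)$ to conclude. Your additional remarks on $\tilde\omega$ lying in $C^2(g(A),A)$ and on the bookkeeping of $t$ are minor refinements the paper leaves implicit.
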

\begin{proof}
If $\omega$ generates a linear deformation of $A$, then $(A,\cdot^\omega)$ is a left-symmetric conformal algebra. Considering the sub-adjacent Lie conformal algebra of $(A,\cdot^\omega)$, we have
\begin{eqnarray*}
[a_\lambda b]^\omega&=&a\cdot^\omega_\lambda b-b\cdot^\omega_{-\partial-\lambda} a\\
&=&a_\lambda b+t\omega_\lambda(a,b)-b_{-\partial-\lambda}a-t\omega_{-\partial-\lambda}(b,a)\\
&=&[a_\lambda b]+t{\tilde{\omega}}_{\lambda}(a,b).
\end{eqnarray*}
Thus, $\tilde{\omega}$ generates a linear deformation of $g(A)$.
\end{proof}

\begin{pro}
If $N$ is a Nijenhuis operator on a left-symmetric conformal algebra $A$, then $N$ is also a Nijenhuis operator on its sub-adjacent Lie conformal algebra $g(A)$.
\end{pro}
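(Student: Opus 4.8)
The plan is to reduce the Nijenhuis identity on $g(A)$ directly to the one on $A$ through the definition of the sub-adjacent bracket, so that essentially no computation beyond unwinding definitions is required. I would start from the left-hand side of the defining identity \eqref{defi3.7-1} for the bracket of $g(A)$, writing
$$[N(a)_\lambda N(b)] = N(a)_\lambda N(b) - N(b)_{-\partial-\lambda} N(a),$$
and then apply the left-symmetric Nijenhuis relation \eqref{defi3.4-1} to each of the two terms. Since \eqref{defi3.4-1} is a polynomial identity in the spectral parameter, it remains valid after the substitution $\lambda \mapsto -\partial-\lambda$, giving $N(a)_\lambda N(b) = N(a\cdot^N_\lambda b)$ and $N(b)_{-\partial-\lambda}N(a) = N(b\cdot^N_{-\partial-\lambda}a)$. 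By $\cb[\partial]$-linearity of $N$ this yields
$$[N(a)_\lambda N(b)] = N\big(a\cdot^N_\lambda b - b\cdot^N_{-\partial-\lambda}a\big).$$

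The remaining step is to identify the argument $a\cdot^N_\lambda b - b\cdot^N_{-\partial-\lambda}a$ with the deformed bracket $[a_\lambda b]_N$ of \eqref{defi3.7-2}. Expanding both deformed products by \eqref{defi3.4-2} and regrouping the six resulting terms, I would pair $N(a)_\lambda b$ with $-b_{-\partial-\lambda}N(a)$ to form $[N(a)_\lambda b]$, pair $a_\lambda N(b)$ with $-N(b)_{-\partial-\lambda}a$ to form $[a_\lambda N(b)]$, and collect the remaining $-N(a_\lambda b) + N(b_{-\partial-\lambda}a) = -N([a_\lambda b])$. This gives exactly
$$a\cdot^N_\lambda b - b\cdot^N_{-\partial-\lambda}a = [N(a)_\lambda b] + [a_\lambda N(b)] - N([a_\lambda b]) = [a_\lambda b]_N,$$
so substituting back produces $[N(a)_\lambda N(b)] = N([a_\lambda b]_N)$, which is precisely \eqref{defi3.7-1} for $g(A)$.

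I do not expect a genuine obstacle here: the whole argument is a definition-chase rather than a structural computation. The only points requiring a little care are the validity of \eqref{defi3.4-1} under the conformal substitution $\lambda \mapsto -\partial-\lambda$ and the correct bookkeeping of the $-\partial-\lambda$ arguments when matching terms against the sub-adjacent bracket. Once those are handled, $\cb[\partial]$-linearity of $N$ closes the computation immediately.
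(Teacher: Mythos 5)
Your proof is correct and is essentially identical to the paper's: both expand $[N(a)_\lambda N(b)]=N(a)_\lambda N(b)-N(b)_{-\partial-\lambda}N(a)$, apply the left-symmetric Nijenhuis identity \eqref{defi3.4-1} to each term (with spectral parameters $\lambda$ and $-\partial-\lambda$ respectively), and then use $\cb[\partial]$-linearity of $N$ to regroup the six resulting terms into $[N(a)_\lambda b]+[a_\lambda N(b)]-N([a_\lambda b])$. Your bookkeeping is in fact slightly more careful than the paper's, whose displayed middle line carries a sign slip on the $b_{-\partial-\lambda}N(a)$ term even though its conclusion is correct.
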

\begin{proof}
For all $a,b\in A$, by Definition \ref{defi3.4}, we have
\begin{eqnarray*}
[N(a)_\lambda N(b)]&=&N(a)_\lambda N(b)-N(b)_{-\partial-\lambda} N(a)\\
&=&N\big(N(a)_\lambda b+a_\lambda N(b)-N(a_\lambda b)-N(b)_{-\partial-\lambda} a+b_{-\partial-\lambda} N(a)+N(b_{-\partial-\lambda} a)\big)\\
&=&N\big([N(a)_\lambda b]+[a_\lambda N(b)]-N([a_\lambda b])\big).
\end{eqnarray*}
Thus, $N$ is a Nijenhuis operator on $g(A)$.
\end{proof}

\bigskip

\section{Formal deformations of a left-symmetric conformal algebra}
In this section, we study the formal deformation theory of a left-symmetric conformal algebra and show that the cohomology defined in Section 3 controls formal deformations of the left-symmetric conformal algebra. Let $A$ be a left-symmetric conformal algebra and $\theta\in C^2(A,A)$ be $\theta_\lambda(a,b)=a_\lambda b$, for $a,b\in A$. Consider the space $A[t]$ of formal power series in $t$ with coefficients in $A$.

\begin{defi}
A {\rm formal $1$-parameter deformation} of $A$ is a product $\theta_t$ on $A[t]$
$$\theta_t=\sum_{i\geq0} \theta_i t^i, \qquad\quad {\rm where}\ \theta_i\in C^2(A,A) \ {\rm with} \ \theta_0=\theta$$
such that the map $\theta_t$ defines a left-symmetric conformal algebra structure on $A[t]$.
\end{defi}

According the above definition, for $a,b,c\in A$, $\theta_t$ is a formal $1$-parameter deformation of $A$ if and only if
$${\theta_t}_\lambda (a,{\theta_t}_\mu(b,c))-{\theta_t}_{\lambda+\mu}({\theta_t}_\lambda(a,b),c)={\theta_t}_\mu (b,{\theta_t}_\lambda(a,c))-{\theta_t}_{\lambda+\mu}({\theta_t}_\mu(b,a),c),$$
and if and only if

\begin{eqnarray}
\qquad\sum_{i+j=n}\big({\theta_i}_{\lambda+\mu}({\theta_j}_\lambda(a,b),c)-{\theta_i}_\lambda (a,{\theta_j}_\mu(b,c))\big)=\sum_{i+j=n}\big({\theta_i}_{\lambda+\mu}({\theta_j}_\mu(b,a),c)-{\theta_i}_\mu (b,{\theta_j}_\lambda(a,c))\big).\label{formal1}
\end{eqnarray}

\begin{defi}
A {\rm formal $1$-parameter deformation} of a Lie conformal algebra $R$ is a product $\theta_t$ on $R[t]$
$$\theta_t=\sum_{i\geq0} \theta_i t^i, \qquad\quad {\rm where}\ \theta_i\in C^2_{Lie}(R,R) \ {\rm with} \ \theta_0=[\cdot_\lambda\cdot]$$
such that the map $\theta_t$ defines a Lie conformal algebra structure on $R[t]$.
\end{defi}

Similarly, for $a,b,c\in R$, $\theta_t$ is a formal $1$-parameter deformation of $R$ if and only if
$${\theta_t}_\lambda (a,{\theta_t}_\mu(b,c))={\theta_t}_{\lambda+\mu}({\theta_t}_\lambda(a,b),c)+{\theta_t}_\mu (b,{\theta_t}_\lambda(a,c)),$$
and if and only if

\begin{eqnarray*}
\qquad\sum_{i+j=n}{\theta_i}_\lambda (a,{\theta_j}_\mu(b,c))=\sum_{i+j=n}\big({\theta_i}_{\lambda+\mu}({\theta_j}_\lambda(a,b),c)+{\theta_i}_\mu (b,{\theta_j}_\lambda(a,c))\big).
\end{eqnarray*}

\begin{pro}
Let $A$ be a left-symmetric conformal algebra and $\theta_t$ a formal $1$-parameter deformation of $A$. Then ${\Theta}_t$ defined by
$${{\Theta}_t}_\lambda(a,b)={\theta_t}_\lambda(a,b)-{\theta_t}_{-\partial-\lambda}(b,a),\qquad \forall \ a,b\in A,$$
is a formal $1$-parameter deformation of its sub-adjacent Lie conformal algebra $g(A)$.
\end{pro}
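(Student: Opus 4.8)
The plan is to recognize $\Theta_t$ as the sub-adjacent bracket of the deformed left-symmetric conformal algebra, so that the whole statement reduces to the already-established correspondence between a left-symmetric conformal algebra and its sub-adjacent Lie conformal algebra. By hypothesis $\theta_t$ equips $A[t]$ with a left-symmetric conformal algebra structure over $\cb[[t]]$. First I would observe that the map
$$(\Theta_t)_\lambda(a,b)=(\theta_t)_\lambda(a,b)-(\theta_t)_{-\partial-\lambda}(b,a)$$
is exactly the $\lambda$-bracket attached to $(A[t],\theta_t)$ by the construction of the sub-adjacent Lie conformal algebra. Hence, applying that proposition to $(A[t],\theta_t)$ shows immediately that $\Theta_t$ defines a Lie conformal algebra structure on $A[t]$. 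This is the substantive content: the Jacobi identity together with the sesquilinearity and skew-symmetry of $\Theta_t$ come for free from the left-symmetric axioms of $\theta_t$, with no further computation.

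It then remains to check the bookkeeping required by the definition of a formal $1$-parameter deformation of $g(A)$. Writing $\Theta_t=\sum_{i\geq0}\Theta_i t^i$ with
$$(\Theta_i)_\lambda(a,b)=(\theta_i)_\lambda(a,b)-(\theta_i)_{-\partial-\lambda}(b,a),$$
I would verify two things. Since $\theta_0=\theta$ is the original product, the zeroth-order term is $(\Theta_0)_\lambda(a,b)=a_\lambda b-b_{-\partial-\lambda}a=[a_\lambda b]$, so $\Theta_0$ is the bracket of $g(A)$. For each $i$ one must then check $\Theta_i\in C^2_{Lie}(g(A),g(A))$: the conformal antilinearity in each slot is inherited directly from that of $\theta_i$, while the skew-symmetry $(\Theta_i)_\lambda(a,b)=-(\Theta_i)_{-\partial-\lambda}(b,a)$ is a one-line computation using the substitution $-\partial-(-\partial-\lambda)=\lambda$, namely
$$(\Theta_i)_{-\partial-\lambda}(b,a)=(\theta_i)_{-\partial-\lambda}(b,a)-(\theta_i)_{\lambda}(a,b)=-(\Theta_i)_\lambda(a,b).$$
Combined with the previous paragraph, this exhibits $\Theta_t$ as a formal $1$-parameter deformation of $g(A)$.

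The only delicate point is the first step: one must be sure that the sub-adjacent-algebra proposition, proved for a left-symmetric conformal algebra over $\cb$, applies to $(A[t],\theta_t)$ over the formal power series ring. This is routine, because that proposition is a polynomial identity in the structure map (the Jacobi identity of the sub-adjacent algebra is derived purely from the left-symmetry axiom) and $\theta_t$ is $\cb[[t]]$-bilinear, so the identity holds verbatim, coefficient-by-coefficient in $t$. If one prefers to avoid formal scalars altogether, the same conclusion can be reached by extracting the coefficient of $t^n$ in the Lie conformal Jacobi identity for $\Theta_t$ and matching it against the antisymmetrization of the order-$n$ left-symmetric relation \eqref{formal1}; this is purely computational and presents no conceptual difficulty, so the conceptual route above is preferable. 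The genuinely unavoidable work is thus only the short skew-symmetry check and care with the $-\partial-\lambda$ substitutions.
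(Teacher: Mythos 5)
Your proof is correct, but it takes a different route from the paper's. The paper disposes of this proposition with the single line ``It can be obtained by direct computations,'' i.e.\ the intended argument is to expand the Lie conformal Jacobi identity (and skew-symmetry) for $\Theta_t$ order by order in $t$ and match the coefficient of $t^n$ against the antisymmetrization of the order-$n$ left-symmetric relation \eqref{formal1} --- exactly the ``purely computational'' alternative you mention and set aside. Your argument instead observes that, by the paper's own definition of a formal deformation, $(A[t],\theta_t)$ is itself a left-symmetric conformal algebra, and that $\Theta_t$ is precisely its sub-adjacent $\lambda$-bracket, so the Jacobi identity comes for free from the sub-adjacent Lie conformal algebra proposition of Section 3; what remains is only the bookkeeping that $\Theta_0=[\cdot_\lambda\cdot]$ and that each coefficient $\Theta_i$ lies in $C^2_{Lie}(g(A),g(A))$ (sesquilinearity inherited from $\theta_i$, skew-symmetry via the involution $\mu\mapsto-\partial-\mu$), which you carry out correctly. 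The trade-off: your route is shorter, makes the structural reason for the result transparent (it is just the left-symmetric-to-Lie correspondence applied to the deformed algebra), and reuses an established proposition; its one delicate point is justifying that a proposition stated for algebras over $\cb$ (and, strictly speaking, under the paper's blanket finiteness assumption on $\cb[\partial]$-modules) applies to $A[t]$, which you address adequately by noting the proof is a formal identity in the structure map. The paper's coefficientwise computation avoids that scalar-extension scruple but at the price of an unilluminating expansion it never actually writes down.
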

\begin{proof}
It can be obtained by direct computations.
\end{proof}

\begin{defi}
Two formal $1$-parameter deformations $\theta_t$ and $\theta'_t$ of a left-symmetric conformal algebra $A$ are said to be {\rm equivalent} if there is a formal isomorphism
$$\phi_t=\sum_{i\geq0}\phi_it^i:A[t]\rightarrow A[t], \qquad {\rm where} \ \phi_i\in{\CHom}(A,A) \ {\rm with} \  \phi_0=\rm id_A$$
such that $\phi_t$ is a morphism of left-symmetric conformal algebras from $(A[t],\theta'_t)$ to $(A[t],\theta_t)$.
\end{defi}

According the above definition, $\theta_t$ and $\theta'_t$ are equivalent if and only if
\begin{eqnarray}
\sum_{i+j=n}{\phi_i}({\theta'_j}_\lambda(a,b))=\sum_{i+j+k=n}{\theta_i}_\lambda(\phi_j(a),\phi_k(b)).\label{formal2}
\end{eqnarray}

\begin{thm}\label{thmdeform}
Let $\theta_t$ be a formal $1$-parameter deformation of the left-symmetric conformal algebra $A$. Then $\theta_1$ is a $2$-cocycle with coefficients in the adjoint representation. Furthermore, the corresponding cohomology class in $H^2(A,A)$ depends on the equivalence class of the deformation $\theta_t$.
\end{thm}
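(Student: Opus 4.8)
The plan is to extract the lowest nontrivial orders of the two defining equations \eqref{formal1} and \eqref{formal2} and match them against the explicit cocycle and coboundary formulas of Section 3; both assertions then fall out by inspection.

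For the first assertion, I would expand the deformation condition \eqref{formal1} at order $n=1$. Since $i+j=1$ forces $(i,j)\in\{(0,1),(1,0)\}$ and $\theta_0$ is the original product $a_\lambda b$, the order-one equation reads
\begin{align*}
&{({\theta_1}_\lambda(a,b))}_{\lambda+\mu}c - a_\lambda{\theta_1}_\mu(b,c) + {\theta_1}_{\lambda+\mu}(a_\lambda b,c) - {\theta_1}_\lambda(a,b_\mu c) \\
&= {({\theta_1}_\mu(b,a))}_{\lambda+\mu}c - b_\mu{\theta_1}_\lambda(a,c) + {\theta_1}_{\lambda+\mu}(b_\mu a,c) - {\theta_1}_\mu(b,a_\lambda c).
\end{align*}
This is exactly the $2$-cocycle condition \eqref{defi3.1-1} for $\omega=\theta_1$ in the adjoint module, and as observed just after \eqref{defi3.1-2} that condition is precisely $(\delta\theta_1)_{\lambda,\mu}(a,b,c)=0$. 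Hence $\theta_1\in Z^2(A,A)$.

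For the second assertion, I would assume $\theta_t$ and $\theta'_t$ are equivalent via a formal isomorphism $\phi_t=\sum_{i\geq0}\phi_it^i$ with $\phi_0=\id$, and expand the equivalence condition \eqref{formal2} at order $n=1$. On the left the contributing pairs are $(i,j)$ with $i+j=1$, on the right the triples $(i,j,k)$ with $i+j+k=1$; using $\phi_0=\id$ and $\theta_0=\theta'_0$ equal to the original product, all higher terms drop and one is left with
$${\theta'_1}_\lambda(a,b) + \phi_1(a_\lambda b) = {\theta_1}_\lambda(a,b) + {\phi_1(a)}_\lambda b + a_\lambda\phi_1(b).$$
Rearranging the right-hand side and comparing with the order-$1$ coboundary expression identified as $\delta N$ in \eqref{defi3.2-3} (equivalently \eqref{defi3.4-2}), this says exactly that $\theta'_1-\theta_1=\delta\phi_1$, the coboundary of the $1$-cochain $\phi_1\in C^1(A,A)$. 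Therefore $[\theta_1]=[\theta'_1]$ in $H^2(A,A)$, so the class is an invariant of the equivalence class of $\theta_t$.

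The argument is essentially mechanical: nothing beyond truncating the two power-series identities and quoting the Section-3 formulas is needed. The only point demanding care — and thus the main, if routine, obstacle — is the bookkeeping of which spectral parameter attaches to which argument once $\theta_0$ and $\phi_0$ are substituted, so that the truncated equations can be recognized term-by-term as $\delta$ applied to a $2$-cochain and to a $1$-cochain, respectively.
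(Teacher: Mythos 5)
Your proposal is correct and follows essentially the same route as the paper: extracting the order-$n=1$ part of \eqref{formal1} yields precisely the paper's equation \eqref{formal3}, i.e.\ the cocycle condition \eqref{defi3.1-1} for $\theta_1$, and extracting the order-$n=1$ part of \eqref{formal2} yields $\theta'_1-\theta_1=\delta\phi_1$, exactly as in the paper's proof. The only cosmetic difference is that you spell out the index bookkeeping ($(i,j)$ and $(i,j,k)$ decompositions) that the paper leaves implicit.
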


\begin{proof}
Since $\theta_t$ is a formal $1$-parameter deformation, for $a,b,c\in A$, from equation \eqref{formal1} for $n=1$, we have
\begin{eqnarray}
&&\quad {\theta_1}_{\lambda+\mu}(a_\lambda b,c)-{\theta_1}_\lambda(a,b_\mu c)+{({\theta_1}_\lambda(a,b))}_{\lambda+\mu}c-a_\lambda({\theta_1}_{\mu}(b,c))\nonumber\\
&&={\theta_1}_{\lambda+\mu}(b_\mu a,c)-{\theta_1}_\mu(b,a_\lambda c)+{({\theta_1}_\mu(b,a))}_{\lambda+\mu}c-b_\mu({\theta_1}_{\lambda}(a,c)),\label{formal3}
\end{eqnarray}
for any $a,b,c\in A$. The equation \eqref{formal3} is equivalent to $\delta\theta_1=0$.

Suppose that $\theta_t$ and $\theta'_t$ are two equivalent deformations. For $n=1$, from \eqref{formal2}, we have
$$(\theta'_1-\theta_1)_\lambda(a,b)=\phi_1(a)_\lambda b+a_\lambda \phi_1(b)-\phi_1(a_\lambda b), \qquad \forall\ a,b\in A.$$
Hence, $\theta'_1-\theta_1=\delta\phi_1$, which implies $\theta'_1$ and $\theta_1$ are in the same cohomology class in $H^2(A,A)$.
\end{proof}

\begin{thm}
Let $A$ be a left-symmetric conformal algebra. If $H^2(A,A)=0,$ then any formal $1$-parameter deformation of $A$ is equivalent to the trivial deformation, i.e. $\theta'_t=\theta$.
\end{thm}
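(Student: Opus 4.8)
The plan is to use the hypothesis $H^2(A,A)=0$ to strip the correction terms of a formal deformation away one degree at a time by equivalences, leaving only the trivial product $\theta$. Let $\theta_t=\sum_{i\geq0}\theta_i t^i$ be a formal $1$-parameter deformation of $A$. I would run an induction on the order $n\geq1$ of the lowest surviving correction: the inductive claim is that, after replacing $\theta_t$ by an equivalent deformation, one may assume $\theta_i=0$ for $1\leq i\leq n-1$, so that $\theta_t=\theta+\theta_n t^n+O(t^{n+1})$. The base case $n=1$ is the content of Theorem~\ref{thmdeform}: $\theta_1$ is a $2$-cocycle.

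The first step of the inductive engine is to check that the leading correction $\theta_n$ is again a $2$-cocycle. Substituting $\theta_1=\cdots=\theta_{n-1}=0$ into the deformation equation \eqref{formal1} at level $n$, the only pairs contributing to the sum $i+j=n$ are $(i,j)=(0,n)$ and $(n,0)$; collecting these terms reproduces exactly equation \eqref{formal3} with $\theta_1$ replaced by $\theta_n$, which is to say $\delta\theta_n=0$. Since $H^2(A,A)=0$, there is a $1$-cochain $\psi\in\CHom(A,A)=C^1(A,A)$ with $\theta_n=\delta\psi$.

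Next I would use $\psi$ to cancel $\theta_n$. Take the formal isomorphism $\phi_t=\id-\psi\,t^n$ (so $\phi_0=\id$) and let $\theta'_t$ be the deformation it produces from $\theta_t$ through the equivalence relation \eqref{formal2}. Comparing \eqref{formal2} degree by degree, only $\phi_0$ acts in degrees $<n$, so $\theta'_m=\theta_m$ there; at degree $n$ the contributing terms give
$$
{\theta'_n}_\lambda(a,b)={\theta_n}_\lambda(a,b)-\big(\psi(a)_\lambda b+a_\lambda\psi(b)-\psi(a_\lambda b)\big)={\theta_n}_\lambda(a,b)-(\delta\psi)_\lambda(a,b)=0,
$$
using the formula for $\delta\psi$ recorded in the proof of Theorem~\ref{thmdeform}. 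Thus $\theta'_t=\theta+O(t^{n+1})$ and the induction advances, producing at each stage an equivalence $\phi_t^{(n)}=\id-\psi_n t^n$.

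Finally I would assemble the infinite composite $\phi_t=\cdots\circ\phi_t^{(2)}\circ\phi_t^{(1)}$, which is a well-defined formal isomorphism since at any fixed power of $t$ all but finitely many factors act as the identity, and which carries $\theta_t$ to $\theta$; hence $\theta_t$ is equivalent to the trivial deformation $\theta'_t=\theta$. The main obstacle is the bookkeeping in the inductive step: verifying cleanly that \eqref{formal1} collapses to the cocycle identity $\delta\theta_n=0$ once the lower corrections vanish, and that the chosen $\phi_t^{(n)}$ leaves every lower-order term untouched while annihilating $\theta_n$. The passage to the infinite composite is then the routine formal-power-series limit.
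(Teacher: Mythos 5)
Your proof is correct and follows essentially the same route as the paper: the paper likewise kills the lowest-order term by a formal isomorphism $\id+\eta t$ with $\theta_1=\delta\eta$ and then simply says ``by repeat this argument,'' which is exactly the induction you spell out. Your version is in fact the more careful one: you verify the key inductive point that $\theta_n$ is again a $2$-cocycle once $\theta_1=\cdots=\theta_{n-1}=0$, you get the sign right ($\phi_t=\id-\psi t^n$; under the paper's own convention \eqref{formal2} its choice $\id+\eta t$ would give a new linear term $2\delta\eta$ rather than $0$, so the paper implicitly needs $\theta_1=-\delta\eta$), and you justify that the infinite composite of the stage-by-stage isomorphisms is a well-defined formal isomorphism.
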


\begin{proof}
Suppose that $\theta_t$ is a formal $1$-parameter deformation of $A$, then $\theta_1$ is a $2$-cocycle from Theorem \ref{thmdeform}. Since $H^2(A,A)=0,$ there exists $\eta\in \CHom(A,A)$ such that $\theta_1=\delta\eta.$

Let $\phi_t={\rm id_A}+\eta t$. Then
$$\bar{\theta}_t=\phi^{-1}_t\circ\theta_t\circ(\phi_t\otimes\phi_t)$$
is equivalent to $\theta_t$. Since $\theta_1=\delta\eta$, we can get that the linear terms of $\bar{\theta}_t$ vanish. By repeat this argument, we get that $\theta_t$ is equivalent to $\theta'_t=\theta$.
\end{proof}

\bigskip

\section{$T^*$-extensions of a left-symmetric conformal algebra}
In this section, we study the $T^*$-extensions of a left-symmetric conformal algebra and then give sufficient and necessary conditions for two $T^*$-extensions equivalent and isometrically respectively.

\begin{defi}
A {\rm conformal bilinear form} on a left-symmetric conformal algebra $A$ is a $\cb$-bilinear map $B_\lambda:A\times A\rightarrow \cb[\lambda]$ satisfying
$$B_\lambda(\partial a,b)=-\lambda B_\lambda(a,b),\qquad B_\lambda(a,\partial b)=\lambda B_\lambda(a,b), \qquad \forall a,b\in A.$$
A conformal bilinear form $B$ is called {\rm symmetric} if it satisfies
$$B_\lambda(a,b)=B_{-\lambda}(b,a),$$
{\rm invariant} if it satisfies
$$B_{\lambda+\mu}(a_\lambda b,c)-B_\lambda(a,b_\mu c)=B_{\lambda+\mu}(b_\mu a,c)-B_\mu(b,a_\lambda c),$$
{\rm non-degenerate} if the $\cb[\partial]$-module homomorphism $\varphi:A\rightarrow A^{*c}$ given by $\varphi(a)_\lambda (b)=B_\lambda(a,b)$, is an isomorphism.
\end{defi}

\begin{defi}
Let $A$ be a left-symmetric conformal algebra. If $A$ admits a non-degenerate invariant symmetric bilinear form $B$, then we call $A$ {\rm a quadratic left-symmetric conformal algebra}.
\end{defi}

Let $A,A'$ be two quadratic left-symmetric conformal algebras and $B,B'$ be the corresponding bilinear forms respectively.  Two quadratic left-symmetric conformal algebras $A,A'$ are said to be {\it isometric} if there exists a left-symmetric conformal algebra isomorphism $\varphi:A\rightarrow A'$ such that $\varphi(B_\lambda(a,b))={B}_\lambda'(\varphi(a),\varphi(b)), \ \forall\ a,b\in A$.

Recall that $(A^{*c}, L^*_A-R^*_A, -R^*_A)$ is the coadjoint module of a left-symmetric conformal algebra $A$. Then we have
\begin{pro}
Let $A$ be a left-symmetric conformal algebra and $\omega_\lambda:A\times A\rightarrow A^{*c}$ be a bilinear map. Define the $\lambda$-product on the $\cb[\partial]$-module $A\oplus A^{*c}$ as follows:
\begin{eqnarray}(a+f)\cdot^\omega_\lambda(b+g)=a_\lambda b+\omega_\lambda(a,b)+{(L^*_A-R^*_A)(a)}_\lambda (g)-{R^*_A(b)}_{-\partial-\lambda} (f),\label{pro2.13}\end{eqnarray}
for any $a,b\in A, f,g\in A^{*c}$. Then $(A\oplus A^{*c},\cdot^\omega)$ is a left-symmetric conformal algebra with the above $\lambda$-product if and only if $\omega_\lambda:A\times A\rightarrow A^{*c}$ is a $2$-cocycle with coefficients in the coadjoint module.
\end{pro}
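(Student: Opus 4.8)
The plan is to verify the two defining identities of a left-symmetric conformal algebra for the product $\cdot^\omega$ on $A\oplus A^{*c}$ given in \eqref{pro2.13}, and to see that the second (left-symmetry) identity is exactly the vanishing of $\delta\omega$ in the coadjoint module. First I would dispose of the $\cb[\partial]$-sesquilinearity axioms $(\partial x)\cdot^\omega_\lambda y=-\lambda\,x\cdot^\omega_\lambda y$ and $x\cdot^\omega_\lambda(\partial y)=(\partial+\lambda)\,x\cdot^\omega_\lambda y$: on the $A$-summand these hold because $A$ is a left-symmetric conformal algebra, the terms built from $L^*_A-R^*_A$ and $R^*_A$ respect them because these are $\cb[\partial]$-module homomorphisms into $\Cend(A^{*c})$, and the $\omega$-terms respect them precisely when $\omega$ satisfies the conformal antilinearity conditions \eqref{ant1} and \eqref{ant2}. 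Thus sesquilinearity of $\cdot^\omega$ is equivalent to $\omega\in C^2(A,A^{*c})$, and from now on I assume $\omega$ is a conformal $2$-cochain and concentrate on the left-symmetry identity.

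Next I would expand the associator-type defect
\[
D_{\lambda,\mu}(x,y,z):=(x\cdot^\omega_\lambda y)\cdot^\omega_{\lambda+\mu}z-x\cdot^\omega_\lambda(y\cdot^\omega_\mu z)-(y\cdot^\omega_\mu x)\cdot^\omega_{\lambda+\mu}z+y\cdot^\omega_\mu(x\cdot^\omega_\lambda z)
\]
for $x=a+f$, $y=b+g$, $z=c+h$, and split it according to $A\oplus A^{*c}$. The $A$-component of $D$ is just the left-symmetry defect of $A$, which vanishes. For the $A^{*c}$-component the crucial structural remark is that $\cdot^\omega$ is affine in $\omega$: since $\omega$ takes its arguments in $A$ while the $A$-component of every product is the $\omega$-free expression $a_\lambda b$, no term quadratic in $\omega$ can arise. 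Writing $\cdot^\omega=\cdot^0+(\textrm{$\omega$-part})$, where $\cdot^0$ is the semidirect product $A\ltimes_{L^*_A-R^*_A,\,-R^*_A}A^{*c}$, the $\omega$-free part of the $A^{*c}$-component of $D$ is the left-symmetry defect of $\cdot^0$, which is zero by the earlier Proposition, since $(A^{*c},L^*_A-R^*_A,-R^*_A)$ is an $A$-module.

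It then remains to collect the terms of $D$ that are linear in $\omega$ and to recognize them as a coboundary. The module-action contributions produce $a_\lambda\omega_\mu(b,c)$ and $b_\mu\omega_\lambda(a,c)$ (the left coadjoint action $L^*_A-R^*_A$) together with the right-action terms $\omega_\lambda(a,b)_{\lambda+\mu}c$ and $\omega_\mu(b,a)_{\lambda+\mu}c$ (the right action $-R^*_A$, recalling $v_{\lambda+\mu}c=-R^*_A(c)_{-\partial-\lambda-\mu}v$); the inner arguments $b_\mu c$ and $a_\lambda c$ produce $\omega_\lambda(a,b_\mu c)$ and $\omega_\mu(b,a_\lambda c)$; and plugging the $A$-parts $a_\lambda b$, $b_\mu a$ of the first products into the first slot of $\omega$ produces $\omega_{\lambda+\mu}(a_\lambda b,c)$ and $\omega_{\lambda+\mu}(b_\mu a,c)$. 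Comparing with the $n=2$ instance of the coboundary operator $\delta$, every term matches $-(\delta\omega)_{\lambda,\mu}(a,b,c)$ term by term, with one exception to be handled in the last step.

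The main obstacle is matching the two first-slot product terms $\omega_{\lambda+\mu}(a_\lambda b,c)-\omega_{\lambda+\mu}(b_\mu a,c)$ with the single bracket term $-\omega_{\lambda+\mu}([a_\lambda b],c)$ of $\delta\omega$, where $[a_\lambda b]=a_\lambda b-b_{-\partial-\lambda}a$. This reduces to the identity $\omega_{\lambda+\mu}(b_\mu a,c)=\omega_{\lambda+\mu}(b_{-\partial-\lambda}a,c)$, which I would prove by writing $b_\mu a=\sum_n\mu^n c_n$ with $c_n\in A$ and applying the first-slot sesquilinearity $\omega_\nu(\partial x,c)=-\nu\,\omega_\nu(x,c)$: substituting $\mu\mapsto-\partial-\lambda$ and expanding the binomial, the coefficient of $\omega_{\lambda+\mu}(c_n,c)$ collapses to $(-\lambda+(\lambda+\mu))^n=\mu^n$. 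With this identity in hand the $\omega$-linear part of the $A^{*c}$-component of $D$ equals $-(\delta\omega)_{\lambda,\mu}(a,b,c)$, so $D\equiv0$, i.e. $(A\oplus A^{*c},\cdot^\omega)$ is a left-symmetric conformal algebra, if and only if $\delta\omega=0$, that is, if and only if $\omega$ is a $2$-cocycle with coefficients in the coadjoint module.
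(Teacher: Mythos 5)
Your proof is correct and takes essentially the same route as the paper's: both reduce the left-symmetry identity for $\cdot^\omega$ to the semidirect product proposition (so the $\omega$-free part vanishes) plus a residual condition linear in $\omega$, which is then identified with the $2$-cocycle condition in the coadjoint module. Your write-up is in fact more careful on two points the paper passes over silently --- the observation that sesquilinearity of $\cdot^\omega$ is exactly conformal antilinearity of $\omega$, and the identity $\omega_{\lambda+\mu}(b_{-\partial-\lambda}a,c)=\omega_{\lambda+\mu}(b_\mu a,c)$ needed to match the expanded condition, which involves $\omega_{\lambda+\mu}(a_\lambda b,c)-\omega_{\lambda+\mu}(b_\mu a,c)$, with the coboundary term $\omega_{\lambda+\mu}([a_\lambda b],c)$.
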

\begin{proof}
Since $(A^{*c}, L^*_A-R^*_A, -R^*_A)$ is the coadjoint module of a left-symmetric conformal algebra $A$, then we can get the semi-direct sum $A\ltimes_{L^*_A-R^*_A,-R^*_A}A^{*c}$. The $\lambda$-product on $A\ltimes_{L^*_A-R^*_A,-R^*_A}A^{*c}$ is defined by
$$(a+f)_\lambda(b+g)=a_\lambda b+{(L^*_A-R^*_A)(a)}_\lambda (g)-{R^*_A(b)}_{-\partial-\lambda} (f), \qquad \forall\ a,b\in A, f,g\in A^{*c}.$$
By direct computations, $A\oplus A^{*c}$ is a left-symmetric conformal algebra with $\lambda$-product \eqref{pro2.13} if and only if
\begin{eqnarray*}
&&\omega_{\lambda+\mu}(a_\lambda b,c)-R^*_A(c)_{-\partial-\lambda-\mu}\omega_\lambda(a,b)-\omega_\lambda(a,b_\mu c)-{(L^*_A-R^*_A)(a)}_\lambda\big(\omega_\mu(b,c)\big)\\
&=&\omega_{\lambda+\mu}(b_\mu a,c)-R^*_A(c)_{-\partial-\lambda-\mu}\omega_\mu(b,a)-\omega_\mu(b,a_\lambda c)-{(L^*_A-R^*_A)(b)}_\mu\big(\omega_\lambda(a,c)\big)
\end{eqnarray*}
holds, and if and only if $\omega$ is a $2$-cocycle with coefficients in the coadjoint module.
\end{proof}

In particular, we also have
\begin{pro}\label{pro4.4}
Let $A$ be a left-symmetric conformal algebra and $\omega_\lambda:A\times A\rightarrow A^{*c}$ be a bilinear map. Define the $\lambda$-product on the $\cb[\partial]$-module $A\oplus A^{*c}$ as follows:
\begin{eqnarray*}(a+f)\cdot^\omega_\lambda(b+g)=a_\lambda b+\omega_\lambda(a,b)+{L^*_A(a)}_\lambda (g),\end{eqnarray*}
for any $a,b\in A, f,g\in A^{*c}$. Then $(A\oplus A^{*c}, \cdot^\omega)$ is a left-symmetric conformal algebra with the above $\lambda$-product if and only if $\omega_\lambda:A\times A\rightarrow A^{*c}$ is a $2$-cocycle with coefficients in the module $A^{*c}_L=(A^{*c}, L^*_A,0)$.
\end{pro}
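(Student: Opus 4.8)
The plan is to verify the left-symmetric conformal algebra identity for $\cdot^\omega$ directly, substituting general elements and isolating the role of $\omega$. First I would note that $\cdot^\omega$ is the sum of the semi-direct $\lambda$-product of $A\ltimes_{L^*_A,0}A^{*c}$ and the correction $\omega$; concretely, writing $x=a+f$, $y=b+g$, $z=c+h$, the formula for $\cdot^\omega$ does not involve the $A^{*c}$-component of its left factor, since there is no $R^*_A$-term (the right action of $A^{*c}_L$ is $0$). Consequently, in any iterated product the covectors $f$ and $g$ never propagate to the outer product and drop out, so only $a,b,c$ and the last covector $h$ can appear in the associator.

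Then I would compute the two sides of
$$(x\cdot^\omega_\lambda y)\cdot^\omega_{\lambda+\mu}z - x\cdot^\omega_\lambda(y\cdot^\omega_\mu z)=(y\cdot^\omega_\mu x)\cdot^\omega_{\lambda+\mu}z - y\cdot^\omega_\mu(x\cdot^\omega_\lambda z)$$
and split the resulting equality into its $A$- and $A^{*c}$-components. The $A$-component is exactly the left-symmetry identity of $A$, which holds by hypothesis. The $A^{*c}$-component separates into the terms containing $h$, namely
$${L^*_A(a_\lambda b)}_{\lambda+\mu}h-{L^*_A(a)}_\lambda{L^*_A(b)}_\mu h-{L^*_A(b_\mu a)}_{\lambda+\mu}h+{L^*_A(b)}_\mu{L^*_A(a)}_\lambda h,$$
together with the terms containing $\omega$. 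The former vanishes because it is precisely the module axiom \eqref{exp2.9-1} for $A^{*c}_L$, and the latter equals
\begin{eqnarray*}
&&\omega_{\lambda+\mu}(a_\lambda b,c)-\omega_\lambda(a,b_\mu c)-{L^*_A(a)}_\lambda\omega_\mu(b,c)\\
&&\quad -\omega_{\lambda+\mu}(b_\mu a,c)+\omega_\mu(b,a_\lambda c)+{L^*_A(b)}_\mu\omega_\lambda(a,c).
\end{eqnarray*}

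Finally I would identify this last expression with $(\delta\omega)_{\lambda,\mu}(a,b,c)$ for the coboundary operator with coefficients in $A^{*c}_L$: it is the general $2$-cocycle condition of the preceding proposition in which the $R^*_A$-terms have been deleted because the right action of $A^{*c}_L$ is zero. Hence $(A\oplus A^{*c},\cdot^\omega)$ satisfies the left-symmetry identity for all elements if and only if this expression vanishes for all $a,b,c\in A$, that is, if and only if $\delta\omega=0$. The only delicate point is the bookkeeping: one must track the spectral parameters $\lambda,\mu,\lambda+\mu$ through the nested products and confirm that, after $f$ and $g$ disappear and the $h$-terms are absorbed by \eqref{exp2.9-1}, exactly the six $\omega$-terms above remain with the displayed signs. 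This is routine but requires care.
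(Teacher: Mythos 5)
Your proof is correct and takes essentially the same route as the paper: the paper obtains Proposition \ref{pro4.4} as the special case $(l,r)=(L^*_A,0)$ of the preceding proposition on the coadjoint module, whose proof is precisely the direct computation you carry out --- expand the left-symmetry identity on $A\oplus A^{*c}$, note that the covectors in the left factors drop out, absorb the $h$-terms via the module axiom \eqref{exp2.9-1}, and identify the six surviving $\omega$-terms with the condition $\delta\omega=0$ in $A^{*c}_L$. Your write-up is in fact more explicit than the paper's, which disposes of the matter with ``by direct computations'' and ``in particular''.
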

Clearly, $A^{*c}$ is an ideal of left-symmetric conformal algebra $A\oplus A^{*c}$, and $A$ is isomorphic to the factor left-symmetric conformal algebra $(A\oplus A^{*c})/A^{*c}$.  Moreover, consider the following symmetric bilinear form $B$ on $A\oplus A^{*c}$ for all $a+f,b+g\in A\oplus A^{*c}$:
$$B_\lambda(a+f,b+g)=f_\lambda(b)+g_{-\lambda}(a).$$
Then we have the following lemma.

\begin{lem}
Let $A, A^{*c}, \omega$ be in Proposition \ref{pro4.4} and $B$ be as above. Then the left-symmetric conformal algebra in $(A\oplus A^{*c}, \cdot^\omega)$ defined in Proposition \ref{pro4.4} is a quadratic left-symmetric conformal algebra
if and only if $\omega$ is invariant.
\end{lem}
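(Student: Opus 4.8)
The plan is to check, one by one, the four conditions that make $(A\oplus A^{*c},\cdot^\omega)$ a quadratic left-symmetric conformal algebra with respect to $B_\lambda(a+f,b+g)=f_\lambda(b)+g_{-\lambda}(a)$, and to isolate which of them actually constrains $\omega$. First I would verify that $B$ is conformal and symmetric with no hypothesis on $\omega$ at all: using the conformal linearity $f_\lambda(\partial b)=\lambda f_\lambda(b)$ together with the dual $\partial$-action $(\partial f)_\lambda=-\lambda f_\lambda$ on $A^{*c}$, one gets $B_\lambda(\partial(a+f),b+g)=-\lambda B_\lambda(a+f,b+g)$ and $B_\lambda(a+f,\partial(b+g))=\lambda B_\lambda(a+f,b+g)$, while $B_{-\lambda}(b+g,a+f)=g_{-\lambda}(a)+f_\lambda(b)=B_\lambda(a+f,b+g)$ gives symmetry directly.

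Next I would dispose of non-degeneracy, again independently of $\omega$. The form $B$ is the canonical hyperbolic pairing coupling $A$ with its conformal dual, so the associated $\cb[\partial]$-module homomorphism $\varphi\colon A\oplus A^{*c}\to(A\oplus A^{*c})^{*c}$ is an isomorphism: taking $g=0$ forces $f=0$, and taking $b=0$ forces $a=0$ by reflexivity of the finitely generated $\cb[\partial]$-module $A$ assumed throughout the paper. Hence $B$ is a non-degenerate symmetric conformal bilinear form regardless of $\omega$, and the whole content of the lemma collapses to the single equivalence that $B$ is invariant if and only if $\omega$ is invariant.

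The core step is to expand the invariance identity $B_{\lambda+\mu}(x\cdot^\omega_\lambda y,z)-B_\lambda(x,y\cdot^\omega_\mu z)=B_{\lambda+\mu}(y\cdot^\omega_\mu x,z)-B_\mu(y,x\cdot^\omega_\lambda z)$ for general $x=a+f$, $y=b+g$, $z=c+h$, substituting the product of Proposition \ref{pro4.4} and the defining relation $({L^*_A(a)}_\lambda g)_{\lambda+\mu}(b)=-g_\mu(a_\lambda b)$. Each of the four terms breaks into an $\omega$-part, an $L^*_A$-part, and a pure pairing of the functionals $f,g,h$; I expect every contribution containing $f$, $g$ or $h$ to cancel pairwise across the two sides — e.g. the $L^*_A$-output $({L^*_A(a)}_\lambda g)_{\lambda+\mu}(c)=-g_\mu(a_\lambda c)$ annihilates the pairing $g_\mu(a_\lambda c)$ arising from $B_\mu(y,x\cdot^\omega_\lambda z)$, and likewise for $f$ and for the $h$-terms once the spectral variables are matched via $-(\lambda+\mu)=-\lambda-\mu$. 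What remains is precisely
\begin{equation*}
\omega_\lambda(a,b)_{\lambda+\mu}(c)-\omega_\mu(b,a)_{\lambda+\mu}(c)=\omega_\mu(b,c)_{-\lambda}(a)-\omega_\lambda(a,c)_{-\mu}(b),
\end{equation*}
which is exactly the condition that $\omega$ be invariant. Since this must hold for all $a,b,c\in A$ while the cancellation of the $f,g,h$-terms is automatic, $B$ is invariant iff $\omega$ is, and reading the chain of equalities backwards gives the converse.

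The step I expect to be the main obstacle is the spectral-variable bookkeeping in this cancellation: one must invoke the $L^*_A$-relation with the right arguments (for instance reading off $({L^*_A(b)}_\mu f)_{\lambda+\mu}(c)=-f_\lambda(b_\mu c)$ by solving $\mu+\nu=\lambda+\mu$) and keep track of the $-\partial-\lambda$ substitutions hidden in $B_\lambda(x,y\cdot^\omega_\mu z)$ and $B_\mu(y,x\cdot^\omega_\lambda z)$, so that the dual contributions truly cancel rather than merely resemble one another. Once the matching of arguments and spectral parameters is carried out carefully, the reduction to the displayed identity, and hence to the invariance of $\omega$, is forced.
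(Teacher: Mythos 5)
Your proposal is correct and follows essentially the same route as the paper: symmetry and non-degeneracy of $B$ are checked unconditionally, and the invariance identity for $B$ is expanded using ${(L^*_A(a)}_\lambda g)_{\lambda+\mu}(b)=-g_\mu(a_\lambda b)$, after which all terms involving $f,g,h$ cancel in pairs, leaving exactly ${\omega_\lambda(a,b)}_{\lambda+\mu}c-{\omega_\mu(b,c)}_{-\lambda}a-{\omega_\mu(b,a)}_{\lambda+\mu}c+{\omega_\lambda(a,c)}_{-\mu}b=0$, i.e.\ the invariance of $\omega$. Your extra verifications (conformal sesquilinearity of $B$, the appeal to reflexivity for non-degeneracy) are harmless additions that the paper leaves implicit, and your spectral-variable bookkeeping matches the paper's computation exactly.
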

\begin{proof}
First, it is obvious that the bilinear form $B$ on $A\oplus A^{*c}$ is symmetric. Assume $a+f\in A\oplus A^{*c}$ satisfying $B_\lambda(a+f,b+g)=0$ for any $b+g\in A\oplus A^{*c}$. Then $f_\lambda(b)=0$ and $g_{-\lambda}(a)=0$ for any $b\in A, g\in A^{*c}$, that is $a=0, f=0$. So the bilinear form is non-degenerate.

Now suppose that $a+f, b+g, c+h\in A\oplus A^{*c}$, we have
\begin{eqnarray*}
&&B_{\lambda+\mu}\big((a+f)\cdot^\omega_\lambda (b+g),c+h\big)-B_\lambda\big(a+f,(b+g)\cdot^\omega_\mu (c+h)\big)\\
&&-B_{\lambda+\mu}\big((b+g)\cdot^\omega_\mu (a+f),c+h\big)+B_\mu\big(b+g,(a+f)\cdot^\omega_\lambda (c+h)\big)\\
&=&B_{\lambda+\mu}\big(a_\lambda b+\omega_\lambda(a,b)+{L^*_A(a)}_\lambda g,c+h\big)-B_\lambda\big(a+f,b_\mu c+\omega_\mu(b,c)+{L^*_A(b)}_\mu h\big)\\
&&-B_{\lambda+\mu}\big(b_\mu a+\omega_\mu(b,a)+{L^*_A(b)}_\mu f,c+h\big)+B_\mu\big(b+g,a_\lambda c+\omega_\lambda(a,c)+{L^*_A(a)}_\lambda h\big)\\
&=&{\omega_\lambda(a,b)}_{\lambda+\mu}c+{({L^*_A(a)}_\lambda g)}_{\lambda+\mu}c+h_{-\lambda-\mu}(a_\lambda b)-f_\lambda(b_\mu c)-{\omega_\mu(b,c)}_{-\lambda}a-({L^*_A(b)}_\mu h)_{-\lambda}a\\
&&-{\omega_\mu(b,a)}_{\lambda+\mu}c-({L^*_A(b)}_\mu f)_{\lambda+\mu}c-h_{-\lambda-\mu}(b_\mu a)+g_\mu(a_\lambda c)+{\omega_\lambda(a,c)}_{-\mu}b+({L^*_A(a)}_\lambda h)_{-\mu}b\\
&=&{\omega_\lambda(a,b)}_{\lambda+\mu}c-g_\mu(a_\lambda c)+h_{-\lambda-\mu}(a_\lambda b)-f_\lambda(b_\mu c)-{\omega_\mu(b,c)}_{-\lambda}a+h_{-\lambda-\mu}(b_\mu a)\\
&&-{\omega_\mu(b,a)}_{\lambda+\mu}c+f_\lambda (b_\mu c)-h_{-\lambda-\mu}(b_\mu a)+g_\mu(a_\lambda c)+{\omega_\lambda(a,c)}_{-\mu}b-h_{-\lambda-\mu}(a_\lambda b)\\
&=&{\omega_\lambda(a,b)}_{\lambda+\mu}c-{\omega_\mu(b,c)}_{-\lambda}a-{\omega_\mu(b,a)}_{\lambda+\mu}c+{\omega_\lambda(a,c)}_{-\mu}b.
\end{eqnarray*}
Then the bilinear form $B$ on $A\oplus A^{*c}$ is invariant if and only if $\omega$ is invariant.
The lemma holds.
\end{proof}

Now, if $\omega_\lambda:A\times A\rightarrow A^{*c}$ is a invariant $2$-cocycle, we can get a quadratic left-symmetric conformal algebra $(A\oplus A^{*c},\cdot^\omega)$ which is called a {\it $T^*$-extension} of $A$. Denote it by $T^*_\omega A$.

Let $A$ be a left-symmetric conformal algebra and $\omega^1_\lambda, \omega^2_\lambda:A\times A\rightarrow A^{*c}$ be two invariant $2$-cocycles. The corresponding $T^*$-extensions $T^*_{\omega^1} A$ and $T^*_{\omega^2} A$ are said to be {\it equivalent} if there exists a left-symmetric conformal algebra isomorphism $\Phi:T^*_{\omega^1} A\rightarrow T^*_{\omega^2} A$ which is the identity on the ideal $A^{*c}$, and which induces the identity on the factor left-symmetric conformal algebra $(T^*_{\omega^1} A)/A^{*c}\cong A\cong (T^*_{\omega^2} A)/A^{*c}$. The two $T^*$-extensions $T^*_{\omega^1} A$ and $T^*_{\omega^2} A$ of $A$ are said
to be {\it isometrically equivalent} if they are equivalent and $\Phi$ is an isometry.

\begin{thm}
Let $A$ be a left-symmetric conformal algebra and $\omega^1_\lambda, \omega^2_\lambda:A\times A\rightarrow A^{*c}$ be two invariant $2$-cocycles. Then we have

$(i)$ $T^*_{\omega^1} A$ is equivalent to $T^*_{\omega^2} A$ if and only if there exists $\theta\in C^1(A,A^{*c})$ such that
\begin{eqnarray}\omega^1_\lambda(a,b)-\omega^2_\lambda(a,b)={L^*_A(a)}_\lambda\theta(b)-\theta(a_\lambda b),\qquad\quad \forall\ a,b\in A.\label{thm4.6}\end{eqnarray}
In this case, if we define $\beta_\lambda(a,b):=\frac{1}{2}({\theta(a)}_\lambda b+{\theta(b)}_{-\lambda} a)$, then $\beta$ induces a symmetric invariant bilinear form on $A$. Furthermore, $\omega^1=\omega^2$ if and only if $\theta$ is a $1$-cocycle with coefficients in the module $A^{*c}_L=(A^{*c}, L^*_A,0)$.

$(ii)$ $T^*_{\omega^1} A$ is isometrically equivalent to $T^*_{\omega^2} A$ if and only if there exists $\theta\in C^1(A,A^{*c})$ such that                         \eqref{thm4.6} holds and the bilinear form $\beta$ induced by $\theta$ on $A$ vanishes.
\end{thm}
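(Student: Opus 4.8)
The plan is to pin down the shape of any admissible isomorphism $\Phi$, reduce the multiplicativity of $\Phi$ to \eqref{thm4.6} by one explicit expansion, and then read off the assertions about $\beta$ and about the isometry as consequences of that computation. First I would fix the form of $\Phi$. An equivalence $\Phi\colon T^*_{\omega^1}A\to T^*_{\omega^2}A$ is a left-symmetric conformal algebra isomorphism that restricts to the identity on the ideal $A^{*c}$ and induces the identity on the quotient $A\cong(A\oplus A^{*c})/A^{*c}$. Since $\Phi$ is $\cb[\partial]$-linear and fixes $A^{*c}$ pointwise, these two constraints force $\Phi(a+f)=a+\theta(a)+f$ for a unique $\cb[\partial]$-module homomorphism $\theta=\mathrm{pr}_{A^{*c}}\circ\Phi|_A\in C^1(A,A^{*c})$; conversely, any such $\theta$ gives a $\cb[\partial]$-linear bijection of this form, with inverse $a+f\mapsto a-\theta(a)+f$. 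Hence equivalence holds precisely when some $\theta\in C^1(A,A^{*c})$ makes the associated $\Phi$ a homomorphism.

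Next I would impose $\Phi\big((a+f)\cdot^{\omega^1}_\lambda(b+g)\big)=\Phi(a+f)\cdot^{\omega^2}_\lambda\Phi(b+g)$ and expand both sides using the product of Proposition \ref{pro4.4} together with the defining relation ${({L^*_A(a)}_\lambda h)}_{\lambda+\mu}(c)=-h_\mu(a_\lambda c)$. The $A$-components agree automatically, while the $A^{*c}$-component of the left side is $\theta(a_\lambda b)+\omega^1_\lambda(a,b)+{L^*_A(a)}_\lambda g$ and that of the right side is $\omega^2_\lambda(a,b)+{L^*_A(a)}_\lambda\theta(b)+{L^*_A(a)}_\lambda g$. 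The terms ${L^*_A(a)}_\lambda g$ cancel, so multiplicativity is equivalent to $\omega^1_\lambda(a,b)-\omega^2_\lambda(a,b)={L^*_A(a)}_\lambda\theta(b)-\theta(a_\lambda b)$, which is exactly \eqref{thm4.6}. This settles the first equivalence in (i) in both directions at once.

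For the form $\beta_\lambda(a,b)=\tfrac12({\theta(a)}_\lambda b+{\theta(b)}_{-\lambda}a)$, symmetry and conformal bilinearity are immediate from the conformal linearity of $\theta$ and the $\partial$-action on $A^{*c}$. Invariance is the only step needing an idea rather than bookkeeping: because $\omega^1$ and $\omega^2$ are invariant, the lemma preceding the definition of $T^*$-extension shows that $B$ is an invariant form on both $T^*_{\omega^1}A$ and $T^*_{\omega^2}A$, and since $\Phi$ is an algebra isomorphism the pullback $\Phi^{*}B$ is again invariant on $T^*_{\omega^1}A$. A short computation gives $B_\lambda(\Phi(a+f),\Phi(b+g))=B_\lambda(a+f,b+g)+2\beta_\lambda(a,b)$, so $\Phi^{*}B-B=2\beta$ is an invariant form depending only on the $A$-components; restricting to $A$ shows $\beta$ is invariant there. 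For the remaining clause I would compute the coboundary of $\theta$ for the module $A^{*c}_L$: its right action is zero, so $(\delta\theta)_\lambda(a,b)={L^*_A(a)}_\lambda\theta(b)-\theta(a_\lambda b)$, which by \eqref{thm4.6} equals $\omega^1_\lambda(a,b)-\omega^2_\lambda(a,b)$; thus $\omega^1=\omega^2$ if and only if $\delta\theta=0$, i.e.\ $\theta$ is a $1$-cocycle in $A^{*c}_L$.

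Finally, part (ii) falls out of the same identity $B_\lambda(\Phi(a+f),\Phi(b+g))=B_\lambda(a+f,b+g)+2\beta_\lambda(a,b)$: an equivalence $\Phi$ is an isometry exactly when the correction $2\beta$ vanishes identically, i.e.\ $\beta=0$, which combined with (i) yields the stated criterion. I expect the genuinely delicate point to be the invariance of $\beta$; the pullback argument is the clean way to obtain it, the alternative being a direct verification from the invariance of $\omega^1-\omega^2$ after substituting \eqref{thm4.6}. Everything else is a single bounded expansion plus the cancellation of the ${L^*_A(a)}_\lambda g$ terms.
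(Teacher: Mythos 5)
Your proof is correct, and its skeleton coincides with the paper's: you normalize an equivalence to the form $\Phi(a+f)=a+\theta(a)+f$ with $\theta\in C^1(A,A^{*c})$, expand $\Phi\big((a+f)\cdot^{\omega^1}_\lambda(b+g)\big)=\Phi(a+f)\cdot^{\omega^2}_\lambda\Phi(b+g)$ using the product of Proposition \ref{pro4.4} to see that multiplicativity of $\Phi$ is exactly \eqref{thm4.6}, and use $B_\lambda(\Phi(a+f),\Phi(b+g))=B_\lambda(a+f,b+g)+2\beta_\lambda(a,b)$ to settle part (ii); these are verbatim the paper's steps (your convention $\theta(a)=\Phi(a)-a$ is the consistent one — the paper writes $\theta(a)=a-\Phi(a)$ but then uses $\Phi(a)=a+\theta(a)$, a harmless sign slip). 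The one place you genuinely depart from the paper is the invariance of $\beta$. The paper verifies it by direct $\lambda$-computation: from \eqref{thm4.6} it derives ${\omega^1_\lambda(a,b)}_{\lambda+\mu}c-{\omega^2_\lambda(a,b)}_{\lambda+\mu}c=-{\theta(b)}_\mu(a_\lambda c)-{\theta(a_\lambda b)}_{\lambda+\mu}c$ and combines this with the invariance identities of $\omega^1$ and $\omega^2$ to produce $\beta_{\lambda+\mu}(a_\lambda b,c)-\beta_\lambda(a,b_\mu c)=\beta_{\lambda+\mu}(b_\mu a,c)-\beta_\mu(b,a_\lambda c)$. You instead argue by pullback: $B$ is invariant on $T^*_{\omega^2}A$ by the lemma preceding the theorem, hence $\Phi^*B$ is invariant on $T^*_{\omega^1}A$ because $\Phi$ intertwines the two products, and $\Phi^*B-B=2\beta$ depends only on the $A$-components; since invariance is linear in the form and the $A$-component of $(a+f)\cdot^{\omega^1}_\lambda(b+g)$ is $a_\lambda b$, restricting to triples from $A$ gives invariance of $\beta$. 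Both arguments are valid in the stated regime (yours uses the existence of $\Phi$, which is available exactly when \eqref{thm4.6} holds), and yours is conceptually cleaner in that it explains where the invariance comes from, while the paper's is a self-contained identity manipulation that never re-invokes the lemma. A small bonus of your write-up: computing $(\delta\theta)_\lambda(a,b)={L^*_A(a)}_\lambda\theta(b)-\theta(a_\lambda b)$ in the module $A^{*c}_L$ supplies the detail behind the paper's unproved remark that $\omega^1=\omega^2$ if and only if $\theta$ is a $1$-cocycle.
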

\begin{proof}
$(i)$ $T^*_{\omega^1} A$ is equivalent to $T^*_{\omega^2} A$ if and only if there exists a left-symmetric conformal algebra isomorphism $\Phi:T^*_{\omega^1} A\rightarrow T^*_{\omega^2} A$ satisfying $\Phi|_{A^{*c}}={\rm id}_{A^{*c}}$ and $a-\Phi(a)\in A^{*c}, \ \forall \ a\in A.$

If $\Phi$ satisfies the above conditions, we define $\theta:A\rightarrow A^{*c}$ by $\theta(a)=a-\Phi(a).$ Then $\theta\in C^1(A,A^{*c})$ and for any $a+f, b+g\in A\oplus A^{*c}$, we have
\begin{eqnarray*}
\Phi((a+f)\cdot^{\omega^1}_\lambda(b+g))&=&\Phi(a_\lambda b+{\omega^1}_\lambda(a,b)+{L^*_A(a)}_\lambda (g))\\
&=&a_\lambda b+\theta(a_\lambda b)+{\omega^1}_\lambda(a,b)+{L^*_A(a)}_\lambda (g),\\
\Phi(a+f)\cdot^{\omega^2}_\lambda\Phi(b+g)&=&(a+\theta(a)+f)\cdot^{\omega^2}_\lambda(b+\theta(b)+g)\\
&=&a_\lambda b+\omega^2_\lambda(a,b)+{L^*_A(a)}_\lambda\theta(b)+{L^*_A(a)}_\lambda (g),
\end{eqnarray*}
then \eqref{thm4.6} holds. It is obvious that $\omega^1=\omega^2$ if and only if $\theta$ is a $1$-cocycle.

Conversely, if there exists $\theta\in C^1(A,A^{*c})$ such that \eqref{thm4.6}, define $\Phi(a+f)=a+\theta(a)+f, \ \forall \ a+f\in A\oplus A^{*c}$. It is easy to prove that $\Phi:T^*_{\omega^1} A\rightarrow T^*_{\omega^2} A$ satisfying $\Phi|_{A^{*c}}={\rm id}_{A^{*c}}$ and $a-\Phi(a)\in A^{*c}, \ \forall \ a\in A$, is a left-symmetric conformal algebra isomorphism.

Consider the bilinear form $\beta$ induced by $\theta$. Note that for any $a,b,c\in A,$ we have
\begin{eqnarray}
{\omega^1_\lambda(a,b)}_{\lambda+\mu}c-{\omega^2_\lambda(a,b)}_{\lambda+\mu}c&=&{({L^*_A(a)}_\lambda\theta(b))}_{\lambda+\mu}c
-{\theta(a_\lambda b)}_{\lambda+\mu}c\label{thm4.6-1}\\
&=&-{\theta(b)}_\mu(a_\lambda c)-{\theta(a_\lambda b)}_{\lambda+\mu}c\nonumber
\end{eqnarray}
and $\omega^1, \omega^2$ are invariant, i.e.
\begin{eqnarray}
&&{\omega^1_\lambda(a,b)}_{\lambda+\mu}c-{\omega^1_\mu(b,c)}_{-\lambda}a={\omega^1_\mu(b,a)}_{\lambda+\mu}c-{\omega^1_{\lambda}(a,c)}_{-\mu}b, \label{thm4.6-2}\\
&&{\omega^2_\lambda(a,b)}_{\lambda+\mu}c-{\omega^2_\mu(b,c)}_{-\lambda}a={\omega^2_\mu(b,a)}_{\lambda+\mu}c-{\omega^2_{\lambda}(a,c)}_{-\mu}b.
\label{thm4.6-3}
\end{eqnarray}
Combine \eqref{thm4.6-1}, \eqref{thm4.6-2} and \eqref{thm4.6-3}, we can get
$$\beta_{\lambda+\mu}(a_\lambda b,c)-\beta_\lambda(a,b_\mu c)=\beta_{\lambda+\mu}(b_\mu a,c)-\beta_\mu(b,a_\lambda c).$$
Thus $\beta$ is a symmetric invariant bilinear form on $A$.

$(ii)$ Let the isomorphism $\Phi$ be defined as $(i)$. Then for any $a+f,b+g\in A\oplus A^{*c}$, we have
\begin{eqnarray*}
B_\lambda(\Phi(a+f),\Phi(b+g))&=&B_\lambda(a+\theta(a)+f,b+\theta(b)+g)\\
&=&{\theta(a)}_\lambda(b)+f_\lambda(b)+{\theta(b)}_{-\lambda}(a)+g_{-\lambda}(a)\\
&=&{\theta(a)}_\lambda(b)+{\theta(b)}_{-\lambda}(a)+B_\lambda(a+f, b+g)\\
&=&2\beta_\lambda(a,b)+B_\lambda(a+f, b+g).
\end{eqnarray*}
Thus, $\Phi$ is an isometry if and only if $\beta=0$.
\end{proof}

\bigskip
\noindent
{\bf Authors contributions.}  Both the authors contributed equally to this work.

\bigskip
\noindent
{\bf Acknowledgements. } This work was financially supported by National
Natural Science Foundation of China (No.11301144, 11771122, 11801141), China Postdoctoral Science Foundation (2020M682272) and NSF of Henan Province (212300410120).

\bigskip
\noindent
{\bf Data availability.} The data that support the findings of this study are available from the corresponding author upon reasonable request.


\begin{thebibliography}{abc}

\bibitem{B} C. Bai,
            A further study on non-abelian phase spaces: left-symmetric algebraic approach and related geometry.
            \emph{Rev. Math. Phys.} {\bf 18} (2006), 545--564.

\bibitem{BDK} B. Bakalov, A. D'Andrea, V.G. Kac,
              Theory of finite pseudoalgebras.
              \emph{Adv. Math.} {\bf 162} (2001), 1--140.


\bibitem{BKV} B. Bakalov, V.G. Kac, A. Voronov,
              Cohomology of conformal algebras.
              \emph{Comm. Math. Phys.} {\bf 200} (1999), 561--589.

\bibitem{BPZ} A.A. Belavin, A.M. Polyakov, A.B. Zamolodchikov,
              Infinite conformal symmetry in two-dimensional quantum field theory.
              \emph{Nuclear Phys.} {\bf 241} (1984), 333--380.

\bibitem{B1} D. Burde,
            Left-symmetric algebras, or pre-Lie algebras in geometry and physics.
            \emph{Cent. Eur. J. Math.} {\bf 4} (2006), no. 3, 323--357.

\bibitem{B2} D. Burde,
             Simple left-symmetric algebras with solvable Lie algebra.
             \emph{Manuscripta Math.} {\bf 95} (1998), no. 3, 397--411.

\bibitem{CK}  S. Cheng and V.G. Kac,
              Conformal modules.
              \emph{Asian J. Math.} {\bf 1} (1997), 181--193.

\bibitem{DK}  A. D'Andrea, V. Kac,
             Structure theory of finite conformal algebras.
             \emph{Selecta Math. (N.S.)} {\bf 4} (1998), no. 3, 377--418.

\bibitem{HB} Y. Hong, C. Bai,
             Conformal classical Yang-Baxter equation, $S$-equation and
             $\mathcal{O}$-operators.
             \emph{Lett. Math. Phys.} {\bf 110} (2020), 885--909.

\bibitem{HL} Y. Hong, F. Li,
             On left-symmetric conformal bialgebras.
             \emph{J. Algebra Appl.} {\bf 14} (2015), no. 1, 1450079, 37 pp.


\bibitem{Ka} V.G. Kac,
              Vertex algebras for beginners, 2nd Edition.
              \emph{Amer. Math. Soc.} Providence, RI, 1998.

\bibitem{Ka1}  V.G. Kac,
              Formal distribution algebras and conformal algebras.
              XII-th \emph{Int. Congr. Math. Phys.  Int. Press}, Cambridge, MA 1999, pp. 80--97.

\bibitem{Ko2} P.S. Kolesnikov,
              On the Wedderburn principal theorem in conformal algebras.
              \emph{J. Algebra Appl.} {\bf 6} (2007), 119--134.

\bibitem{Ko3} P.S. Kolesnikov,
              On finite representations of conformal algebras.
              \emph{J. Algebra} {\bf 331} (2011), 169--193.

\bibitem{M}   A. Mizuhara,
              On simple left symmetric algebras over a solvable Lie algebra.
              \emph{Sci. Math. Jpn.} {\bf 57} (2003), no. 2, 325--337.

\bibitem{LST} S. Liu, L. Song, R. Tang,
              Representations and cohomologies of regular Hom-pre-Lie algebras.
              \emph{J. Algebra Appl.} {\bf 19} (2020), no. 8, 2050149, 22 pp.


\bibitem{Ro}  M. Roitman,
              Universal enveloping conformal algebras.
              \emph{Sel. Math. New Ser.} {\bf 6} (2000), 319--345.



\end{thebibliography}
 \end{document}